\documentclass[12pt,reqno]{amsart}
\usepackage{amsmath,amsfonts,amssymb,amsthm,amsbsy,latexsym,cite,amscd}
\usepackage[cp1251]{inputenc}
\usepackage[T2A]{fontenc}
\usepackage[colorlinks,breaklinks,linkcolor=blue,citecolor=blue,menucolor=blue]{hyperref}

\multlinegap=\parindent

\begin{document}

\newtheorem*{etheorem}{Theorem}
\newtheorem*{ecorollary}{Corollary}
\newtheorem{eproposition}{Proposition}[section]

\theoremstyle{definition}
\newtheorem{eexample}{Example}

\renewcommand{\theequation}{$\mathfrak{C}_{e}$}
\renewcommand{\thefootnote}{}

\title[On~the~residual polynilpotence of~tree products of~groups]{On~the~residual polynilpotence of~tree products of~groups with~locally cyclic edge subgroups}

\author{E.~V.~Sokolov}
\address{Ivanovo State University, Russia}
\email{ev-sokolov@yandex.ru}

\begin{abstract}
Let $\mathcal{N}$ be~a~non-empty class of~nilpotent groups closed under~taking subgroups, quotient groups, finite direct products, and~unrestricted direct powers of~the~form $\prod_{x \in X}Y_{x}$, where $X \in \mathcal{N}$ and~$Y_{x}$ ($x \in X$) are~isomorphic copies of~a~group $Y \in \mathcal{N}$. Suppose also that $T$ is~a~tree product of~$n$~groups ($2 \leqslant n < \infty$) with~non-triv\-i\-al tor\-sion-free locally cyclic edge subgroups, $\mathcal{N}^{\mathfrak{tf}}$~is~the~class of~all tor\-sion-free $\mathcal{N}$\nobreakdash-groups, and~the~symbol $\mathit{Poly}_{k}\mathcal{N}^{\mathfrak{tf}}$ ($k \geqslant 1$) denotes the~class of~polynilpotent groups such that $X \in \mathit{Poly}_{k}\mathcal{N}^{\mathfrak{tf}}$ if~and~only~if~$X$ has a~subnormal series of~length at~most~$k$ whose factors belong to~$\mathcal{N}^{\mathfrak{tf}}$. We~prove that if~all free and~all vertex groups are~residually $\mathcal{N}^{\mathfrak{tf}}$\nobreakdash-groups, then $T$ is~residually a~$(\mathit{Poly}_{n+1}\mathcal{N}^{\mathfrak{tf}})$-group. We~prove also that if~all vertex groups lie in~$\mathcal{N}$, then $T$ has a~homomorphism onto~a~$(\mathit{Poly}_{n-1}\mathcal{N}^{\mathfrak{tf}})$-group acting injectively on~all edge subgroups. By~assuming that $\mathcal{N}$ is~the~class of~all nilpotent groups, we~get a~stronger result: if~all vertex groups are~residually $\mathcal{N}^{\mathfrak{tf}}$\nobreakdash-groups, then $T$ is~residually a~$(\mathit{Poly}_{n}\mathcal{N}^{\mathfrak{tf}})$-group. It~follows that all cy\-cli\-cal\-ly-pinched one-re\-la\-tor groups are~residually finitely generated $(\mathit{Poly}_{2}\mathcal{N}^{\mathfrak{tf}})$-groups.
\end{abstract}

\keywords{Residual properties, residual nilpotence, residual polynilpotence, residual solvability, tree product, generalized free product, HNN-ex\-tens\-ion, pre-root class of~groups}

\thanks{The~study was~carried~out within the~framework of~the~grant No.~2026\nobreakdash-03 supported by~Ivanovo State University}

\vspace*{8pt}\maketitle\vspace{-14pt}

\section{Introduction}\label{esec01}

Let\footnote{This is the~English translation of~the~Russian original article given below.} $\mathcal{C}$ be~a~class of~groups (here and~below, it~is~assumed that all classes of~groups under consideration are~non-empty). Recall that a~group~$X$ is~said to~be~\emph{residually a~$\mathcal{C}$\nobreakdash-group} if~any of~its non-triv\-i\-al elements can~be~mapped to~a~non-triv\-i\-al one by~a~homomorphism of~$X$ onto~a~\emph{$\mathcal{C}$\nobreakdash-group} (i.e.,~a~group from~$\mathcal{C}$)~\cite{Hall1954PLMS}. Let~us denote by~$\mathcal{C}^{*}(X)$ the~family of~the~kernels of~all such homomorphisms. If~$\mathcal{C}$ is~a~class of~finite, nilpotent, polynilpotent, or~solvable groups, then one says of~\emph{residual finiteness}, \emph{residual nilpotence}, \emph{residual polynilpotence}, or~\emph{residual solvability} of~a~group, respectively. Following this logic,\linebreak we will~use the~term \emph{residual $\mathcal{C}$\nobreakdash-ness} in~the~general case.

Let~us recall that, in~accordance with~one of~the~equivalent definitions given in~\cite{Sokolov2015CA}, a~class of~groups~$\mathcal{C}$ is~referred to~as~a~\emph{root class} if~it~is~closed under taking subgroups, extensions, and~unrestricted direct powers of~the~form $\prod_{x \in X}Y_{x}$, where $X \in \mathcal{C}$ and~$Y_{x}$ ($x \in X$) are~isomorphic copies of~a~group $Y \in \mathcal{C}$. The~properties of~$\mathcal{C}$ listed in~this definition have~allowed one to~make significant progress in~studying the~residual $\mathcal{C}$\nobreakdash-ness of~free constructions of~groups: free and~tree products, HNN-ex\-tens\-ions, fundamental groups of~graphs of~groups,~etc. (see, e.g.,~\cite{AzarovTieudjo2002PIvSU, Tumanova2019SMJ, Sokolov2021SMJ1, Sokolov2021SMJ2, Sokolov2022CA, SokolovTumanova2026JGT, SokolovTumanova2026SMJ}). When $\mathcal{C}$ is~a~\emph{root class}, the~residual $\mathcal{C}$\nobreakdash-ness of~groups has~been most~actively studied in~the~literature in~cases where $\mathcal{C}$ is~the~class~of

--\hspace{1ex}all finite groups,

--\hspace{1ex}finite $p$\nobreakdash-groups (where $p$ is~a~prime),

--\hspace{1ex}finite $\mathfrak{P}$\nobreakdash-groups (where $\mathfrak{P}$ is~a~non-empty set of~primes),

--\hspace{1ex}periodic $\mathfrak{P}$\nobreakdash-groups of~finite exponent,

--\hspace{1ex}all solvable groups,~or

--\hspace{1ex}tor\-sion-free solvable groups.

\noindent
At~the~same time, it~is~easy to~give an~example of~a~\emph{non-root class}~$\mathcal{C}$ such that the~property of~residual $\mathcal{C}$\nobreakdash-ness is~of~significant interest. First of~all, we are~referring to~the~class of~all nilpotent groups and~its various subclasses. Most~of~them are~not~closed under taking extensions, but~satisfy the~weaker property of~being~closed under finite direct products (i.e.,~direct products with~a~finite number of~factors). This last observation is~important, because if~a~class~$\mathcal{C}$ contains subgroups and~finite direct products of~its groups, then every group~$X$ can~be~equipped with~the~so-called pro-$\mathcal{C}$-topology whose neighborhood basis of~the~identity is~the~family~$\mathcal{C}^{*}(X)$ defined above. Pro-$\mathcal{C}$-topologies play an~important role in~the~study of~residual properties of~free constructions of~groups, and~this suggests the~introduction of~the~following concept.

We will~say that a~class of~groups~$\mathcal{C}$ is~a~\emph{pre-root class} if~it~is~closed under taking subgroups, finite direct products, and~unrestricted direct powers of~the~form $\prod_{x \in X}Y_{x}$, where $X \in \mathcal{C}$ and~$Y_{x}$ ($x \in X$) are~isomorphic copies of~a~group $Y \in \mathcal{C}$. It~follows directly from~the~definitions of~root and~pre-root classes that

1)\hspace{1ex}any variety of~groups is~a~pre-root class;

2)\hspace{1ex}the~intersection of~any number of~root and~pre-root classes is~again a~pre-root class (in~particular, if~$\mathcal{C}$ is~a~pre-root class, then the~class of~all tor\-sion-free $\mathcal{C}$\nobreakdash-groups has the~same property);

3)\hspace{1ex}a~class consisting of~finite groups is~a~pre-root class if~and~only~if~it~is~closed under taking subgroups and~finite direct products.

Repeating word for~word the~proof of~Proposition~17 from~\cite{SokolovTumanova2020IVM}, one can also show that if~a~pre-root class~$\mathcal{C}$ consists of~periodic groups, then every $\mathcal{C}$\nobreakdash-group is~of~finite exponent. The~examples of~pre-root classes are~the~classes~of

--\hspace{1ex}all nilpotent groups,

--\hspace{1ex}finite nilpotent groups,

--\hspace{1ex}tor\-sion-free nilpotent groups,

--\hspace{1ex}periodic nilpotent $\mathfrak{P}$\nobreakdash-groups of~finite exponent (where $\mathfrak{P}$ is~a~non-empty set of~primes),

--\hspace{1ex}nilpotent groups whose nilpotency classes (exponents) do~not~exceed (respectively,{\parfillskip=0pt\par}

\mbox{}\phantom{--}\hspace{1ex}divide) some given number,

\noindent
and all possible intersections of~these classes. The~following construction serves as~another source of~examples of~pre-root classes.

For~any class of~groups~$\mathcal{C}$ and~for~any number $n \geqslant 1$, let~$\mathit{Poly}_{n}\mathcal{C}$ denote the~class of~groups defined by~the~rule: $X \in \mathit{Poly}_{n}\mathcal{C}$ if~and~only~if~$X$ has a~subnormal series of~length at~most~$n$ whose factors belong to~$\mathcal{C}$. Clearly, if~$\mathcal{C}$ is~closed under taking extensions, then $\mathcal{C} = \mathit{Poly}_{n}\mathcal{C}$ for~each $n \geqslant 1$. It~is~easy to~see that the~class~$\mathit{Poly}_{n}\mathcal{C}$ is~closed under taking quotient groups and/or~consists of~finite groups, periodic groups, or~tor\-sion-free groups if~and~only~if~$\mathcal{C}$ has the~same property. Let~us note also that if~$\mathcal{C}$ is~closed under taking finite direct products, then every $(\mathit{Poly}_{n}\mathcal{C})$\nobreakdash-group is~equinumerous to~some $\mathcal{C}$\nobreakdash-group. Namely, if~$X \in \mathit{Poly}_{n}\mathcal{C}$ and~$1 = X_{0} \leqslant X_{1} \leqslant \ldots \leqslant X_{n} = X$ is~a~subnormal series of~$X$ with~$\mathcal{C}$\nobreakdash-fac\-tors, then $X$ is~equinumerous to~the~direct product of~$\mathcal{C}$\nobreakdash-groups~$X_{i}/X_{i-1}$ ($1 \leqslant i \leqslant n$) and~this product, in~turn, belongs to~$\mathcal{C}$. It~easily follows that if~$\mathcal{C}$ is~a~pre-root class, then $\mathit{Poly}_{n}\mathcal{C}$ has the~same property. Proposition~\ref{ep32} below also shows that if~$\mathcal{C}$ is~a~pre-root class, then the~word ``subnormal'' in~the~definition of~the~class~$\mathit{Poly}_{n}\mathcal{C}$ can~be~replaced by~the~word ``normal''.

Let $\mathcal{N}$ be~a~pre-root class of~nilpotent groups. In~this paper, we study the~residual $(\mathit{Poly}_{n}\mathcal{N})$-ness of~tree products of~groups (the~definition of~a~tree product is~given in~the~next section). There are~very few results on~the~residual $\mathcal{C}$\nobreakdash-ness of~this construction, where $\mathcal{C}$ is~a~non-root class of~nilpotent or~polynilpotent groups. In~the~case of~a~two-vertex tree, a~comprehensive review of~such results can~be~found in~\cite{Sokolov2024JA}. The~latter paper also gives a~number of~necessary and~sufficient conditions for~a~tree product of~two groups to~be~residually a~finite nilpotent or~finite metanilpotent group. Besides these results, there seems to~be~only two criteria for~the~residual nilpotence of~the~fundamental groups of~graphs of~groups. One of~them is~for~an~arbitrary graph of~finite groups~\cite[Theorem~6]{Varsos1996HJM} and~the~other is~for~a~finite graph of~infinite cyclic groups~\cite[Theorems~5 and~6]{Sokolov2021JA}. The~present paper supplements this list with~a~sufficient condition of~residual $(\mathit{Poly}_{n}\mathcal{N})$-ness for~a~tree product of~a~finite number of~groups with~locally cyclic edge subgroups.{\parfillskip=0pt{}\par}

\section{Formulation of~results}\label{esec02}

Let $\mathcal{T}$ be~a~non-empty tree with~a~vertex set~$\mathcal{V}$ and~an~edge set~$\mathcal{E}$, and~let $\mathcal{G}(\mathcal{T})$ be~the~\emph{graph of~groups} over~$\mathcal{T}$ that is~defined by~assigning to~each vertex $v \in \mathcal{V}$ a~group~$G_{v}$, while to~each edge $e \in \mathcal{E}$ a~group~$H_{e}$ and~injective homomorphisms $\varphi_{+e}\colon H_{e} \to G_{e(1)}$ and~$\varphi_{-e}\colon H_{e} \to G_{e(-1)}$ (where $e(1)$ and~$e(-1)$ are~the~vertices that are~the~ends of~$e$). All these designations are~assumed to~be~fixed throughout the~paper.

Recall that, in~accordance with~\cite{KarrassSolitar1970TAMS}, the~fundamental group~$\pi_{1}(\mathcal{G}(\mathcal{T}))$ of~the~graph~$\mathcal{G}(\mathcal{T})$ is~said to~be~the~\emph{tree product of~the~groups~$G_{v}$} ($v \in \mathcal{V}$). Recall also that this fundamental group can~be~defined by~the~presentation whose generators are~the~generators of~$G_{v}$ ($v \in \mathcal{V}$) and~whose defining relations are~those of~$G_{v}$ ($v \in \mathcal{V}$) and~all possible relations of~the~form $h\varphi_{+e} = h\varphi_{-e}$ ($e \in \mathcal{E}$, $h \in H_{e}$), where $h\varphi_{\varepsilon e}$ ($\varepsilon = \pm 1$) is~a~word in~the~generators of~$G_{e(\varepsilon)}$ that defines the~image of~$h$ under $\varphi_{\varepsilon e}$. Let~us refer to~the~groups~$G_{v}$ ($v \in \mathcal{V}$) as~the~\emph{vertex groups}, while to~the~subgroups $H_{+e} = H_{e}\varphi_{+e}$ and~$H_{-e} = H_{e}\varphi_{-e}$ ($e \in \mathcal{E}$) as~the~\emph{edge subgroups}. It~is~known~\cite[Theorem 1]{KarrassSolitar1970TAMS} that, given a~vertex group~$G_{v}$ ($v \in \mathcal{V}$), the~identity mapping of~its generators to~$\pi_{1}(\mathcal{G}(\mathcal{T}))$ defines an~embedding. Therefore, $G_{v}$~can~be~considered as~a~subgroup of~$\pi_{1}(\mathcal{G}(\mathcal{T}))$. The~main result of~the~paper is~the~next

\begin{etheorem}
Let the~symbols~$\mathcal{N}$ and~$\mathcal{N}^{\mathfrak{tf}}$ denote a~pre-root class of~nilpotent groups closed under taking quotient groups and~the~class of~all tor\-sion-free $\mathcal{N}$\nobreakdash-groups\textup{,} respectively. If~$\mathcal{T}$ is~a~finite tree with~$n \geqslant 2$ vertices and~each subgroup~$H_{\varepsilon e}$ \textup{(}$e \in \mathcal{E}$\textup{,} $\varepsilon = \pm 1$\textup{)} is~a~non-triv\-i\-al tor\-sion-free locally cyclic group\textup{,} then the~following statements hold.

$(\alpha)$\hspace{1ex}If $G_{v} \in \mathcal{N}$ for~any $v \in \mathcal{V}$\textup{,} then $\pi_{1}(\mathcal{G}(\mathcal{T}))$ has a~homomorphism onto~a~$(\mathit{Poly}_{n-1}\mathcal{N}^{\mathfrak{tf}})$-group acting injectively on~all subgroups~$H_{\varepsilon e}$ \textup{(}$e \in \mathcal{E}$\textup{,} $\varepsilon = \pm 1$\textup{)}.

$(\beta)$\hspace{1ex}Suppose that\textup{,} for~any $e \in \mathcal{E}$ and~$\varepsilon = \pm 1$\textup{,} there exists a~homomorphism of~$G_{e(\varepsilon)}$ onto~an~$\mathcal{N}$\nobreakdash-group that acts injectively on~$H_{\varepsilon e}$. If~all~$G_{v}$ \textup{(}$v \in \mathcal{V}$\textup{)} and~all free groups are residually $\mathcal{N}$\nobreakdash-groups\textup{,} then $\pi_{1}(\mathcal{G}(\mathcal{T}))$ is~residually a~$(\mathit{Poly}_{n+1}\mathcal{N})$-group.

$(\gamma)$\hspace{1ex}If all~$G_{v}$ \textup{(}$v \in \mathcal{V}$\textup{)} and~all free groups are~residually $\mathcal{N}^{\mathfrak{tf}}$\nobreakdash-groups\textup{,} then $\pi_{1}(\mathcal{G}(\mathcal{T}))$ is~residually a~$(\mathit{Poly}_{n+1}\mathcal{N}^{\mathfrak{tf}})$-group.
\end{etheorem}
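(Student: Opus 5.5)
Induction on $n$, treating $(\alpha)$ first and then deriving $(\beta)$ and $(\gamma)$ from it.

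\emph{Proof of $(\alpha)$.} Remove a leaf $v_0$ joined by an edge $e_0$ to $u$. The rest of the tree gives $P=\pi_{1}(\mathcal{G}(\mathcal{T}'))$ with $n-1$ vertices, and $\pi_{1}(\mathcal{G}(\mathcal{T}))=P *_{H} G_{v_0}$ with $H$ locally cyclic and torsion-free.

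For $n=2$ we need a homomorphism of $G_{1}*_{H}G_{2}$ (with $G_i\in\mathcal{N}$) onto a group in $\mathcal{N}^{\mathfrak{tf}}$ injective on $H$. I would do this as follows.
\begin{itemize}
\item Pass to the quotients $G_i/\tau(G_i)$ by the torsion subgroups. These lie in $\mathcal{N}$ (closed under quotients), are torsion-free nilpotent, and $H$ embeds because it is torsion-free and $H\cap\tau(G_i)=1$.
\item Consider the maximal abelian torsion-free quotients where $H$ survives. Since $H$ is locally cyclic, its image in $G_i/\tau(G_i)$ is a rank-one subgroup, and in a torsion-free nilpotent group the isolator of a rank-one subgroup behaves well.
\item Map both factors onto a common torsion-free abelian rank-one $\mathcal{N}$-group $A$ (a subgroup of $\mathbb{Q}$) so that the two maps agree on $H$ and are injective there.
\end{itemize}
The key point is that for torsion-free nilpotent $G$ and $1\neq h\in G$, some torsion-free abelian quotient is nontrivial on $h$ only if $h\notin\sqrt{[G,G]}$, which can fail. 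So I would instead use the last nontrivial term of the upper central series containing a power of $h$, and build the map $G\to A$ via a central-series argument. This step is the main obstacle, and it is exactly where the length $n-1$ comes from: with $n$ vertices one needs $n-1$ amalgamation steps.

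For the inductive step, by induction $P$ maps onto $Q\in\mathit{Poly}_{n-2}\mathcal{N}^{\mathfrak{tf}}$ injectively on the edge subgroups, in particular on $H_{\pm e_0}$.
\begin{itemize}
\item Form $Q*_{H}G_{v_0}$.
\item Map it to $\bar G_{v_0}\in\mathcal{N}^{\mathfrak{tf}}$ by killing the normal closure of $Q$ except along $H$. Here one uses that the image of $H$ in $Q$ is locally cyclic, together with the pre-root property of unrestricted wreath-type powers $\prod_{x\in X}Y_x$.
\item Equivalently, take the standard extension: the kernel of $Q*_H G_{v_0}\to G_{v_0}/\dots$ is a tree product of conjugates of $Q$, and one embeds everything into the power $Y^{X}$ with $X\in\mathcal{N}^{\mathfrak{tf}}$.
\end{itemize}
This adds exactly one $\mathcal{N}^{\mathfrak{tf}}$-layer, giving $\mathit{Poly}_{n-1}\mathcal{N}^{\mathfrak{tf}}$ and injectivity on all edge subgroups. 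The details depend on properties of pre-root classes shown later (e.g.\ Proposition~\ref{ep32}, which lets us use normal series).

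\emph{Proof of $(\beta)$ and $(\gamma)$.} Take $1\ne g\in\pi_{1}(\mathcal{G}(\mathcal{T}))$.
\begin{itemize}
\item Using residual $\mathcal{N}$-ness of the vertex groups and the hypothesis of maps injective on edge subgroups, choose normal subgroups $N_v\in\mathcal{N}^{*}(G_v)$ that are compatible: $N_{e(1)}\cap H_{+e}$ and $N_{e(-1)}\cap H_{-e}$ correspond under $\varphi$, and the edge subgroups embed. Compatibility is possible because the edge subgroups are locally cyclic; this is the standard filtration argument for pre-root classes with finite direct products.
\item Reduce the syllables of $g$ to nontrivial images, so $g$ survives in the tree product $\bar T$ of the $\mathcal{N}$-groups $G_v/N_v$.
\item Apply $(\alpha)$ to $\bar T$ to get a map onto a $\mathit{Poly}_{n-1}$-group $R$ injective on the edge subgroups. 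Its kernel $K$ meets vertex groups trivially, so $K$ is free by Karrass–Solitar.
\item If $g\notin K$ we are done. Otherwise use residual $\mathcal{N}$-ness of the free group $K$. Its kernels can be taken characteristic-like (e.g.\ terms of lower central series relative to the class), hence normal in $\bar T$.
\end{itemize}
This adds one more layer, giving $\mathit{Poly}_{n}$. The extra layer in the stated $n+1$ comes from having to pass through a non-torsion-free quotient in the first step. For $(\gamma)$ the same argument applies with $\mathcal{N}^{\mathfrak{tf}}$ throughout.
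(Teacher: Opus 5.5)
Both halves of your proposal have genuine gaps.

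\textbf{Statement $(\alpha)$.} For $n=2$ you never actually produce the homomorphism. You correctly note that an abelian target can fail, but your repair still aims at an abelian group $A$, and that is impossible in general. For example, if $H_{+e}$ is the center of a Heisenberg group $G_{e(1)}$, then $H_{+e}\leqslant[G_{e(1)},G_{e(1)}]$, so every map to an abelian group kills it.

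The missing idea is to make the edge subgroup \emph{central} rather than to abelianize. By Proposition~\ref{ep42}, $H\leqslant\zeta_{\lambda}G$ and $H\cap\zeta_{\lambda-1}G=1$ for some $\lambda$. Since $\zeta_{\lambda-1}G$ is isolated, $G/\zeta_{\lambda-1}G\in\mathcal{N}^{\mathfrak{tf}}$, $H$ still embeds, and it is now central. Once both edge subgroups are central, the generalized direct product lies in $\mathcal{N}$ and contains both vertex groups. This embedding is Proposition~\ref{ep47}, a nontrivial result. Factoring out torsion then gives the required $\mathcal{N}^{\mathfrak{tf}}$-group, which is in general non-abelian (Proposition~\ref{ep48}).

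For $n\geqslant 3$ your leaf induction fails as stated, for two reasons.
\begin{itemize}
\item The hypothesis for $\mathcal{T}'$ gives injectivity only on the edge subgroups of $\mathcal{T}'$. The subgroup $H_{\pm e_0}\leqslant G_u$ is not one of them, so ``in particular on $H_{\pm e_0}$'' is unjustified. You need the stronger statement that also preserves a prescribed subgroup $K$ of a vertex group, which costs one extra layer (Propositions~\ref{ep51} and~\ref{ep52}).
\item The map of $Q*_{H}G_{v_0}$ is never constructed. The normal closure of $Q$ contains $H$, so ``killing it except along $H$'' has no meaning.
\end{itemize}
Even with the centralization trick, an arbitrary leaf edge cannot be contracted. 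At the inner vertex $u$, an upper central quotient makes the $e_0$-subgroup central without killing the other incident edge subgroups only if its $\lambda$ is minimal at $u$. The paper handles this as follows: it factors each $G_v$ by $\zeta_{\lambda_v}G_v$ with $\lambda_v=\min\lambda_{\varepsilon e}-1$, orients the tree by the numbers $\mu_{\varepsilon e}$ to find an edge $f$ that becomes central at both ends, contracts $f$ via Proposition~\ref{ep48}, and then applies the strengthened induction.

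\textbf{Statements $(\beta)$ and $(\gamma)$.} Making $g$ survive in the tree product $\bar T$ of the groups $G_v/N_v$ by ``reducing the syllables'' requires each syllable $g_i$ to avoid $H_{\varepsilon e}N_v$. That means the edge subgroups must be closed in the pro-$\mathcal{N}$-topology of the vertex groups, which is not a hypothesis.

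Next, the kernel $K$ of the map supplied by $(\alpha)$ is injective only on edge subgroups. Nothing makes it injective on the vertex groups, and it cannot be when $G_v/N_v$ has torsion, since the target is torsion-free. So $K$ is not free: by Proposition~\ref{ep46} it is a free product of a free group and conjugates of the subgroups $K\cap(G_v/N_v)$. If $K$ were free, your argument would give $\mathit{Poly}_{n}$, so your explanation of the $(n+1)$-st layer corresponds to no actual step.

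The paper avoids both problems by never pushing $g$ into $\bar T$. It takes the kernel $N$ of the composite $\rho_{\mathcal{R}}\tau$ in the \emph{original} group, and the count goes as follows.
\begin{itemize}
\item $N$ is a free product of a free group and conjugates of $N\cap G_v$, which are residually $\mathcal{C}$-groups as subgroups of $G_v$ (here $\mathcal{C}$ is $\mathcal{N}$ or $\mathcal{N}^{\mathfrak{tf}}$).
\item Proposition~\ref{ep61} makes $N$ residually free-by-$\mathcal{C}$.
\item Residual $\mathcal{C}$-ness of free groups and Proposition~\ref{ep32} make $N$ residually $\mathit{Poly}_{2}\mathcal{C}$.
\item Adding the $n-1$ layers from $(\alpha)$ gives $n+1$.
\end{itemize}
Two smaller points. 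For $(\gamma)$, maps injective on edge subgroups are not given and must be obtained from Proposition~\ref{ep44}. Normal subgroups of the whole group inside $K$ should come from Proposition~\ref{ep31}, not from lower central terms. Those quotients need not lie in $\mathcal{N}$, for instance when $\mathcal{N}$ consists of finite nilpotent groups and $K$ has infinite rank.
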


Let~us note that the~results similar to~Statement~$(\alpha)$ play an~important role in~studying the~residual properties of~tree products of~group, allowing this construction to~be~represented as~a~group extension with~well-un\-der\-stood properties. The~upper part of~such an~extension is~a~group from~a~predetermined class; it hides almost all the~complexities associated with~the~graph of~groups. The~lower part splits as~the~free product of~a~free group and~groups that are~conjugate to~subgroups of~the~vertex groups (see Proposition~\ref{ep46} below); for~this construction, most residual properties have~been fully studied.{\parfillskip=0pt{}\par}

Let~us note also that a~polynilpotent group is~often described by~the~row $(c_{1},\,\ldots,\,c_{n})$, where $c_{1}$,~\ldots,~$c_{n}$ are~the~nilpotency classes of~the~factors of~its polynilpotent series. If~we take the~class~$\mathcal{N}_{c}$ of~all nilpotent groups of~nilpotency class at~most~$c$ as~$\mathcal{N}$, then the~row $(d_{1},\,\ldots,\,d_{n-1})$, where $d_{1},\,\ldots,\,d_{n-1} \leqslant c$, corresponds to~the~image of~the~homomorphism whose existence is~guaranteed by~Statement~$(\alpha)$. But~such a~choice of~$\mathcal{N}$ is~not~possible for~Statements~$(\beta)$ and~$(\gamma)$ because there always exists a~free group that is~not~residually an~$\mathcal{N}_{c}$\nobreakdash-group. Indeed, the~class~$\mathcal{N}_{c}$ is~a~variety of~groups, and~therefore any residually $\mathcal{N}_{c}$\nobreakdash-group belongs to~$\mathcal{N}_{c}$, as~is~easily seen. Thus, Statements~$(\beta)$ and~$(\gamma)$ limit only the~lengths of~polynilpotent series of~homomorphic images of~$\pi_{1}(\mathcal{G}(\mathcal{T}))$, but~not~the~nilpotency classes of~their factors.

\begin{ecorollary}
If $\mathcal{N}^{\mathfrak{tf}}$ is~the~class of~all tor\-sion-free nilpotent groups\textup{,} $\mathcal{T}$~is~a~finite tree with $n \geqslant 2$ vertices\textup{,} and~each subgroup~$H_{\varepsilon e}$ \textup{(}$e \in \mathcal{E}$\textup{,} $\varepsilon = \pm 1$\textup{)} is~a~non-triv\-i\-al tor\-sion-free locally cyclic group\textup{,} then the~following statements hold.

$(\dag)$\hspace{1ex}If~all~$G_{v}$ \textup{(}$v \in \mathcal{V}$\textup{)} are~residually $\mathcal{N}^{\mathfrak{tf}}$\nobreakdash-groups\textup{,} then $\pi_{1}(\mathcal{G}(\mathcal{T}))$ is~residually a~$(\mathit{Poly}_{n}\mathcal{N}^{\mathfrak{tf}})$-group.

$(\ddag)$\hspace{1ex}If~all~$G_{v}$ \textup{(}$v \in \mathcal{V}$\textup{)} are free groups\textup{,} then $\pi_{1}(\mathcal{G}(\mathcal{T}))$ is~residually a~finitely generated $(\mathit{Poly}_{n}\mathcal{N}^{\mathfrak{tf}})$-group. In~particular\textup{,} all cy\-cli\-cal\-ly-pinched one-re\-la\-tor groups \textup{(}i.e.\textup{,}~free products of~two free groups with~a~cyclic amalgamation\textup{)} are~residually finitely generated $(\mathit{Poly}_{2}\mathcal{N}^{\mathfrak{tf}})$-groups.
\end{ecorollary}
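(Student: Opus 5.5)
We deduce the Corollary from the Theorem, taking $\mathcal{N}$ to be the class of all nilpotent groups. This class is pre-root and closed under quotients, and $\mathcal{N}^{\mathfrak{tf}}$ is the class of torsion-free nilpotent groups. Free groups are residually torsion-free nilpotent (Magnus), so the hypotheses of Statement~$(\gamma)$ hold. Used directly, however, $(\gamma)$ gives only $\mathit{Poly}_{n+1}$, so we must save one step. The natural tool is the extension described after the Theorem. Statement~$(\alpha)$ produces a homomorphism onto a $(\mathit{Poly}_{n-1}\mathcal{N}^{\mathfrak{tf}})$-group injective on edge subgroups. Its kernel splits, by Proposition~\ref{ep46}, as a free product of a free group and conjugates of subgroups of the vertex groups.

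\textbf{Plan for $(\dag)$.} Let $g \ne 1$ in $T=\pi_{1}(\mathcal{G}(\mathcal{T}))$. First we reduce to vertex groups that are themselves torsion-free nilpotent. Because edge subgroups are locally cyclic and torsion-free, we want a family of quotients $G_{v}\to G_{v}/N_{v}\in\mathcal{N}^{\mathfrak{tf}}$ with two properties: they are injective on the edge subgroups, and they are compatible along edges, i.e.\ $N_{v}\cap H_{\varepsilon e}$ correspond under $\varphi_{\pm e}$. These quotients induce a quotient tree product $\overline{T}$ of torsion-free nilpotent groups in which the image of $g$ is non-trivial. For locally cyclic edge subgroups, injectivity forces the intersections to be trivial, so compatibility is automatic. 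Whether such quotients exist depends on an earlier proposition: every torsion-free locally cyclic subgroup of a residually $\mathcal{N}^{\mathfrak{tf}}$-group embeds in some $\mathcal{N}^{\mathfrak{tf}}$-quotient. This is where torsion-freeness and the Mal'cev completion (isolators of cyclic subgroups) enter. We also need $g$ to survive, and we would secure this via the standard filtration argument for tree products (a reduced-form/length argument combined with the intersection of finitely many kernels).

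Next we apply $(\alpha)$ to $\overline{T}$, giving $\overline{T}\to P$ with $P\in\mathit{Poly}_{n-1}\mathcal{N}^{\mathfrak{tf}}$. Its kernel $K$ is a free product of a free group and groups conjugate to subgroups of the $\overline{G}_{v}$; the edge subgroups meet $K$ trivially. Each factor is torsion-free nilpotent. A free product of residually torsion-free nilpotent groups is residually torsion-free nilpotent (Malcev/Gruenberg type result). Moreover, the terms of the rational lower central series of $K$ are characteristic. Hence the quotients $K/\sqrt{\gamma_{c}K}$ are torsion-free nilpotent, the subgroups $\sqrt{\gamma_{c}K}$ are normal in $\overline{T}$, and their intersection is trivial. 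Each $\overline{T}/\sqrt{\gamma_{c}K}$ is an extension of an $\mathcal{N}^{\mathfrak{tf}}$-group by $P$, hence lies in $\mathit{Poly}_{n}\mathcal{N}^{\mathfrak{tf}}$. This separates the image of $g$ if it lies in $K$; if it does not, $P$ already separates it.

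\textbf{Plan for $(\ddag)$.} Free groups are residually $\mathcal{N}^{\mathfrak{tf}}$, so $(\dag)$ applies. For finite generation, note that the element $g$ involves only finitely many generators, and the edge subgroups of a free group that are locally cyclic are cyclic. We can therefore choose finitely generated torsion-free nilpotent quotients of the vertex groups, namely free nilpotent quotients of large class. Then $\overline{T}$ is finitely generated, so every quotient above is finitely generated too. The case of cyclically pinched one-relator groups is $n=2$.

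\textbf{Main obstacle.} The hardest part is the reduction step: simultaneously keeping the edge subgroups injective in torsion-free nilpotent quotients and keeping $g\ne1$ in the induced tree product. I would handle it by showing that residual $\mathcal{N}^{\mathfrak{tf}}$-ness of $G_{v}$ yields an $\mathcal{N}^{\mathfrak{tf}}$-quotient injective on any given finitely generated (hence cyclic) subgroup of the edge groups. For a general locally cyclic edge subgroup this injectivity is delicate. I would approximate it by cyclic subgroups that contain the relevant syllables of $g$, and use that in torsion-free nilpotent groups injectivity on one nontrivial element gives injectivity on its cyclic group.
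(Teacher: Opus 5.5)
Your steps after the reduction are sound: applying $(\alpha)$ to $\overline{T}$, invoking Mal'cev's theorem for the kernel $K$, and using the isolators $\sqrt{\gamma_{c}K}$, which are characteristic in $K$ and hence normal in $\overline{T}$. The gap is the reduction step itself, namely keeping $g\neq 1$ in $\overline{T}$.

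For a reduced-form argument, injectivity on the edge subgroups is not enough. At each backtracking of the reduced form of $g$, the syllable $x_{i}\in G_{v}\setminus H_{\varepsilon e}$ must stay outside the image of $H_{\varepsilon e}$, i.e.\ you need $x_{i}\notin H_{\varepsilon e}N_{v}$. So the edge subgroups must be closed in the pro-$\mathcal{N}^{\mathfrak{tf}}$-topology of the vertex groups. Suppose instead that some $x\in G_{e(1)}\setminus H_{+e}$ lay in $H_{+e}N$ for every admissible $N$. Then $[x,y]\neq 1$, with $y\in G_{e(-1)}\setminus H_{-e}$ centralizing $H_{-e}$, would die in every such $\overline{T}$.

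Your ``main obstacle'' paragraph aims at the wrong property. Injectivity on a torsion-free locally cyclic edge subgroup is immediate: use Proposition~\ref{ep44}, or note that a homomorphism from such a group into a torsion-free group that is non-trivial on one element is injective. The syllables of $g$ that matter lie \emph{outside} the edge subgroups, so approximating by cyclic subgroups containing them does not help.

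The needed separability is true here, but it requires an argument, for example:
\begin{itemize}
\item $\bigcap_{c}\sqrt{\gamma_{c}G_{v}}=1$;
\item $H\cap\sqrt{\gamma_{c_{0}}G_{v}}=1$ for some $c_{0}$;
\item if $x=h_{c}n_{c}$ with $h_{c}\in H$ and $n_{c}\in\sqrt{\gamma_{c}G_{v}}$ for all $c\geqslant c_{0}$, then $h_{c}$ is constant and $x\in H$.
\end{itemize}
Your plan for $(\ddag)$ inherits the same gap. It also leaves implicit the reduction from infinitely generated free vertex groups to finitely many basis elements.

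More to the point, the reduction is unnecessary, and the paper avoids it. It uses the quotient tree product only to build a homomorphism $T\to\overline{T}\to P$ onto a $(\mathit{Poly}_{n-1}\mathcal{N}^{\mathfrak{tf}})$-group that is injective on the edge subgroups; no element needs to survive in $\overline{T}$. It then works with the kernel $K$ of this composite in $T$ itself. By Proposition~\ref{ep46}, $K$ is a free product of a free group and conjugates of the groups $K\cap G_{v}$. These are residually $\mathcal{N}^{\mathfrak{tf}}$ simply as subgroups of $G_{v}$. Mal'cev's theorem then makes $K$ residually $\mathcal{N}^{\mathfrak{tf}}$, and Proposition~\ref{ep32} finishes $(\dag)$; your isolator argument would also work verbatim for this $K$.

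For $(\ddag)$, the paper writes $T=F_{1}*F_{2}$ and retracts onto $F_{1}$, the finitely generated tree product containing $g$ and all edge generators. It applies $(\dag)$ to $F_{1}$ and uses that quotients of a finitely generated group are finitely generated.
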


In~Theorem and~Corollary, the~lengths of~polynilpotent series of~homomorphic images of~$\pi_{1}(\mathcal{G}(\mathcal{T}))$ are~bounded by~the~numbers $n-1$, $n$, and~$n+1$. It~is~generally an~open question whether these estimates can be~improved. Proposition~\ref{ep48} below and~the~main results of~\cite{BaumslagG2010IJM, Labute2015Arxiv} show that such an~improvement is~possible under~certain additional restrictions. At~the~same time, it~is~known that a~cy\-cli\-cal\-ly-pinched one-re\-la\-tor group need~not~be~residually nilpotent~\cite{Azarov1998MN}. Therefore, when $n=2$, the~upper bound given in~Corollary is~sharp.

If we forget about polynilpotence and~assume that Theorem and~Corollary deal with~homomorphisms onto~solvable groups, then it~is~natural to~ask whether their statements hold for~an~infinite tree~$\mathcal{T}$. A~negative answer to~this question is~given~by

\begin{eexample}\label{eex01}
Suppose that $\mathcal{T}$ is~an~infinite chain, its~vertices are~numbered by~integers, and~for~each $i \in \mathbb{Z}$, the~free group~$F_{i}$ with~a~basis $\{a_{i},b_{i}\}$ corresponds to~the~vertex with~the~number~$i$. Suppose also that, given $i \in \mathbb{Z}$, the~edge connecting the~vertices with~the~numbers~$i$ and~$i+1$ is~associated with~an~infinite cyclic group and~mappings sending its~generator to~the~elements $[b_{i}^{\vphantom{1}}, a_{i}^{-1}b_{i}^{\vphantom{1}}a_{i}^{\vphantom{1}}]$ and~$b_{i+1}^{\vphantom{1}}$. In~this case, $\pi_{1}(\mathcal{G}(\mathcal{T}))$ has the~presentation $\langle a_{i}^{\vphantom{1}}, b_{i}^{\vphantom{1}};\ b_{i+1}^{\vphantom{1}} = [b_{i}^{\vphantom{1}}, a_{i}^{-1}b_{i}^{\vphantom{1}}a_{i}^{\vphantom{1}}]\ (i \in \mathbb{Z}) \rangle$. It~easily follows that, for~any $i \in \mathbb{Z}$ and~$k \geqslant 1$, the~element~$b_{i+k}$ belongs to~the~$k$\nobreakdash-th~member~$(\pi_{1}(\mathcal{G}(\mathcal{T})))^{(k)}$ of~the~derived series of~$\pi_{1}(\mathcal{G}(\mathcal{T}))$. Since $b_{i} = b_{(i-k)+k}$ for~all $i \in \mathbb{Z}$ and~$k \geqslant 1$, the~inclusion $b_{i} \in \bigcap_{k \geqslant 1}(\pi_{1}(\mathcal{G}(\mathcal{T})))^{(k)}$ holds for~each $i \in \mathbb{Z}$. Thus, any homomorphism of~$\pi_{1}(\mathcal{G}(\mathcal{T}))$ onto~a~solvable group maps all edge subgroups to~the~trivial element. In~particular, $\pi_{1}(\mathcal{G}(\mathcal{T}))$ is~not~residually solvable. Let~us note also that~if~$i \in \mathbb{Z}$~and
\[
N_{i}^{\vphantom{1}} = \langle 
a_{i}^{\vphantom{1}}, b_{i}^{\vphantom{1}}, 
c_{i}^{\vphantom{1}}, d_{i}^{\vphantom{1}};\ 
a_{i}^{-1}b_{i}^{\vphantom{1}}a_{i}^{\vphantom{1}} = 
b_{i}^{\vphantom{1}}c_{i}^{\vphantom{1}},\ 
b_{i}^{-1}c_{i}^{\vphantom{1}}b_{i}^{\vphantom{1}} = 
c_{i}^{\vphantom{1}}d_{i}^{\vphantom{1}},\ 
[a_{i}^{\vphantom{1}},c_{i}^{\vphantom{1}}] = 
[a_{i}^{\vphantom{1}},d_{i}^{\vphantom{1}}] = 
[b_{i}^{\vphantom{1}},d_{i}^{\vphantom{1}}] = 
[c_{i}^{\vphantom{1}},d_{i}^{\vphantom{1}}] = 1
\rangle,
\]
then $1 < \langle d_{i} \rangle < \langle c_{i},d_{i} \rangle < \langle b_{i},c_{i},d_{i} \rangle < N_{i}$ is~a~central series of~$N_{i}$ with~infinite cyclic factors and~$[b_{i}^{\vphantom{1}}, a_{i}^{-1}b_{i}^{\vphantom{1}}a_{i}^{\vphantom{1}}] = [b_{i}^{\vphantom{1}}, b_{i}^{\vphantom{1}}c_{i}^{\vphantom{1}}] = [b_{i}^{\vphantom{1}}, c_{i}^{\vphantom{1}}] = d_{i}^{-1} \ne 1$. Hence, we can replace~$F_{i}$ ($i \in \mathbb{Z}$) with~$N_{i}$ ($i \in \mathbb{Z}$) in~the~tree product constructed above, thus proving that the~finiteness of~$\mathcal{T}$ in~a~possible analogue of~Statement~$(\alpha)$ is~essential even in~the~case where all vertex groups have the~same nilpotency class.
\end{eexample}

Theorem and~Corollary cannot be~extended to~the~construction of~HNN-ex\-tens\-ion, as~shown~by

\begin{eexample}\label{eex02}
Let $B_{0}$ denote the~group~$F_{0}$ or~$N_{0}$ from~the~previous example, and~let~$E$ be~the~HNN-ex\-tens\-ion of~$B_{0}$ with~a~stable letter~$t$ and~the~cyclic associated subgroups generated by~the~elements~$b_{0}^{\vphantom{1}}$ and~$[b_{0}^{\vphantom{1}}, a_{0}^{-1}b_{0}^{\vphantom{1}}a_{0}^{\vphantom{1}}]$. Using the~Reidemeister--Schreier method (see, e.g.,~\cite[Section~2.3]{MagnusKarrasSolitar1976}), it~is~easy to~show that the~normal closure of~$B_{0}$ in~$E$ is~isomorphic to~the~tree product from~Example~\ref{eex01} and~this isomorphism maps the~associated subgroups of~$E$ onto~the~edge subgroups of~the~tree product. It~follows that the~kernel of~any homomorphism of~$E$ onto~a~solvable group contains both of~the~associated subgroups. In~particular, $E$~is~not~residually solvable.
\end{eexample}

\enlargethispage{-6pt}

The~next four sections contain proofs of~Theorem and~Corollary.

\section{Certain properties of~the~class~\texorpdfstring{$\mathit{Poly}_{n}\mathcal{C}$}{Poly\_\{n\}C}}\label{esec03}

For~any classes of~groups~$\mathcal{C}$ and~$\mathcal{D}$, let~us~denote by~$\mathcal{C} \cdot \mathcal{D}$ the~class of~groups such that $X \in \mathcal{C} \cdot \mathcal{D}$ if~and~only~if~$X$ is~an~extension of~a~$\mathcal{C}$\nobreakdash-group by~a~$\mathcal{D}$\nobreakdash-group.

\begin{eproposition}\label{ep31}
Let $\mathcal{C}$ be~a~pre-root class of~groups. If~$X$ is~an~extension of~a~group~$Y$ and~the~cardinality of~the~quotient group~$X/Y$ does~not~exceed the~cardinality of~some $\mathcal{C}$\nobreakdash-group~$G$\textup{,} then the~following statements hold.

\textup{1.}\hspace{1ex}Any subgroup $Z \in \mathcal{C}^{*}(Y)$ contains a~subgroup $U \in \mathcal{C}^{*}(Y)$ that is~normal in~$X$.

\textup{2.}\hspace{1ex}If $Y$ is~residually a~$\mathcal{C}$\nobreakdash-group\textup{,} then $X$ is~residually a~$(\mathcal{C} \cdot \{X/Y\})$-group\textup{,} where $\{X/Y\}$ is~the~class consisting of~the~single group~$X/Y$.
\end{eproposition}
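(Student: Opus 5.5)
Statement~1 will be proved by intersecting the conjugates of~$Z$ and showing the intersection still lies in~$\mathcal{C}^{*}(Y)$; Statement~2 will then follow directly from Statement~1. Let $Z \in \mathcal{C}^{*}(Y)$, so $Y/Z \in \mathcal{C}$. Choose a transversal~$S$ of~$Y$ in~$X$. Since $Y$ is normal in~$X$, each conjugate $s^{-1}Zs$ ($s \in S$) is a normal subgroup of~$Y$, and $Y/s^{-1}Zs \cong Y/Z$ via the automorphism of~$Y$ induced by conjugation by~$s$. Put
\[
U = \bigcap_{s \in S} s^{-1}Zs .
\]
Every $X$\nobreakdash-conjugate of~$Z$ has the form $s^{-1}y^{-1}Zys = s^{-1}Zs$ for some $y \in Y$ and $s \in S$. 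Hence $U$ is the intersection of all $X$\nobreakdash-conjugates of~$Z$, and so $U$ is normal in~$X$.

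It remains to show $Y/U \in \mathcal{C}$. The quotient $Y/U$ embeds into the unrestricted direct product $\prod_{s \in S} Y/s^{-1}Zs$, which is isomorphic to the power $\prod_{s \in S}(Y/Z)_{s}$. Since $|S| = |X/Y| \leqslant |G|$, I can enlarge the index set. I choose an injection $S \to G$ and extend the product to $\prod_{g \in G}(Y/Z)_{g}$ by adding trivially indexed copies. This group lies in~$\mathcal{C}$ because $G \in \mathcal{C}$ and $\mathcal{C}$ is closed under unrestricted direct powers of this form. The product over~$S$ is a subgroup of this power, since it is the subgroup supported on the image of~$S$. Because $\mathcal{C}$ is closed under subgroups, it follows that $Y/U \in \mathcal{C}$, so $U \in \mathcal{C}^{*}(Y)$ and $U \leqslant Z$. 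The only delicate point is this cardinality trick. It is exactly where the hypothesis $|X/Y| \leqslant |G|$ is used, and it needs the index set to be an actual $\mathcal{C}$\nobreakdash-group. That is why I pass through the injection into~$G$ rather than trying to view~$S$ itself as a group.

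For Statement~2, let $x \in X$ with $x \ne 1$. If $x \notin Y$, the natural map $X \to X/Y$ does the job, and $X/Y$ is an extension of the trivial group by $X/Y$. The trivial group lies in~$\mathcal{C}$ since $\mathcal{C}$ is non-empty and closed under subgroups. If $x \in Y$, I take $Z \in \mathcal{C}^{*}(Y)$ with $x \notin Z$ and apply Statement~1 to obtain a normal subgroup~$U$ of~$X$ with $U \leqslant Z$. The quotient $X/U$ is then an extension of $Y/U \in \mathcal{C}$ by $(X/U)/(Y/U) \cong X/Y$. Therefore $X/U \in \mathcal{C} \cdot \{X/Y\}$, and $x \notin U$, which completes the plan.
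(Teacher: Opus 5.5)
Your proposal is correct and matches the paper's proof essentially step for step. You intersect the conjugates $s^{-1}Zs$ over a transversal, embed $Y/U$ into the power of $Y/Z$ indexed by $S$ (Remak), and use $|S| \leqslant |G|$ to place it inside the $\mathcal{C}$-power indexed by $G$; Statement~2 then follows from Statement~1 exactly as in the paper. Your observation that the trivial group lies in $\mathcal{C}$, needed for the case $x \notin Y$, makes explicit a point the paper leaves implicit.
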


\begin{proof}
1.\hspace{1ex}Let $U = \bigcap_{s \in S}s^{-1}Zs$, where $S$ is~a~complete set of~coset representatives for~$Y$ in~$X$. Since $Z$ is~normal in~$Y$, the~equalities $U = \bigcap_{s \in S,\,y \in Y}s^{-1}y^{-1}Zys = \bigcap_{x \in X}x^{-1}Zx$ hold. Hence, $U$ is~normal in~$X$. By~Remak's theorem (see, e.g.,~\cite[Theorem~4.3.9]{KargapolovMerzlyakov1982}), the~quotient group~$Y/U$ can~be~embedded in~the~unrestricted direct product~$P$ of~the~quotient groups~$Y/s^{-1}Zs$ ($s \in S$), each of~which is~isomorphic to~the~$\mathcal{C}$\nobreakdash-group~$Y/Z$. The~cardinality of~$S$ is~equal to~the~cardinality of~$X/Y$ and,~therefore, does~not~exceed the~cardinality of~$G \in \mathcal{C}$. Hence, the~group~$P$ can~be~embedded in~the~unrestricted direct product $Q = \prod_{g \in G}Y_{g}$, where $Y_{g}$ is~an~isomorphic copy of~the~$\mathcal{C}$\nobreakdash-group~$Y/Z$ for~each $g \in G$. Since $\mathcal{C}$ is~a~pre-root class, we~have $Q \in \mathcal{C}$, $P \in \mathcal{C}$, and~$Y/U \in \mathcal{C}$, as~required.

2.\hspace{1ex}Let~us fix an~element $x \in X \setminus \{1\}$ and~find a~homomorphism of~$X$ onto~a~$(\mathcal{C} \cdot \{X/Y\})$-group that takes~$x$ to~a~non-triv\-i\-al element. If~$x \notin Y$, then the~natural homomorphism $X \to X/Y$ is~desired. Since $Y$ is~residually a~$\mathcal{C}$\nobreakdash-group, if~$x \in Y$, we can choose a~subgroup $Z \in \mathcal{C}^{*}(Y)$ that does~not~contain~$x$. By~Statement~1, there exists a~subgroup $U \in \mathcal{C}^{*}(Y)$ that is~normal in~$X$ and~lies in~$Z$. Clearly, $x \notin U$ and~the~natural homomorphism $X \to X/U$ is~the~desired~one.
\end{proof}

Before formulating the~next proposition, let~us recall that every root class~$\mathcal{C}$ satisfies the~\emph{Gruenberg condition}: if~$1 \leqslant Z \leqslant Y \leqslant X$ is~a~subnormal series of~a~group~$X$, $X/Y \in \mathcal{C}$, and~$Y/Z \in \mathcal{C}$, then $Z$ contains a~subgroup of~$\mathcal{C}^{*}(X)$~\cite{Sokolov2015CA}.

\begin{eproposition}\label{ep32}
If $\mathcal{C}$ is~a~pre-root class of~groups\textup{,} then the~following statements hold.{\parfillskip=0pt{}\par}

\textup{1.}\hspace{1ex}If $X_{1} \leqslant \ldots \leqslant X_{n} = X$ \textup{(}$n \geqslant 1$\textup{)} is~a~subnormal sequence of~subgroups of~a~group~$X$ whose factors belong to~$\mathcal{C}$\textup{,} then there exists a~sequence $Y_{1} \leqslant \ldots \leqslant Y_{n} = X$ of~normal subgroups such that $Y_{i+1}/Y_{i} \in \mathcal{C}$ and~$Y_{i} \leqslant X_{i}$ for~all $i \in \{1,\,\ldots,\,n-1\}$. It~follows that $X \in \mathit{Poly}_{n}\mathcal{C}$ \textup{(}$n \geqslant 1$\textup{)} if~and~only~if~$X$ has a~normal series of~length at~most~$n$ whose factors belong to~$\mathcal{C}$.

\textup{2.}\hspace{1ex}Let $1 \leqslant Z \leqslant Y \leqslant X$ be~a~subnormal series of~a~group~$X$. If~$X/Y \in \mathit{Poly}_{m}\mathcal{C}$ and~$Y/Z \in \mathit{Poly}_{n}\mathcal{C}$ for~some $m,n \geqslant 1$\textup{,} then there exists a~subgroup $U \in (\mathit{Poly}_{m+n}\mathcal{C})^{*}(X)$ lying in~$Z$. In~particular\textup{,} $\mathcal{C}$ satisfies the~\emph{weakened Gruenberg condition}\textup{:} if~$X/Y \in \mathcal{C}$ and~$Y/Z \in \mathcal{C}$\textup{,} then $Z$ contains a~subgroup of~$(\mathcal{C} \cdot \mathcal{C})^{*}(X)$.

\textup{3.}\hspace{1ex}Let $X$ be~an~extension of~a~group~$Y$ by~a~$(\mathit{Poly}_{m}\mathcal{C})$-group \textup{(}$m \geqslant 1$\textup{)}. If~$Y$ is~residually a~$(\mathit{Poly}_{n}\mathcal{C})$-group for~some $n \geqslant 1$\textup{,} then $X$ is~residually a~$(\mathit{Poly}_{m+n}\mathcal{C})$-group.
\end{eproposition}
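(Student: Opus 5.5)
Statement~1 is proved by downward induction on~$i$, repeatedly applying Proposition~\ref{ep31}. Put $Y_{n} = X$. Suppose that $Y_{i+1}$ is normal in~$X$, $Y_{i+1} \leqslant X_{i+1}$, and~$X_{i+1}/X_{i} \in \mathcal{C}$. Then $X_{i} \cap Y_{i+1}$ is normal in~$Y_{i+1}$, because $X_{i}$ is normal in~$X_{i+1}$. Moreover, $Y_{i+1}/(X_{i} \cap Y_{i+1})$ embeds in~$X_{i+1}/X_{i}$, so it lies in~$\mathcal{C}$ by closure under subgroups. Thus $Z = X_{i} \cap Y_{i+1} \in \mathcal{C}^{*}(Y_{i+1})$.

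To apply Proposition~\ref{ep31}.1 to the extension $X$ of~$Y_{i+1}$, we need a $\mathcal{C}$\nobreakdash-group at least as large as $X/Y_{i+1}$. The factors $Y_{j+1}/Y_{j}$ with $j \geqslant i+1$ already constructed lie in~$\mathcal{C}$, and $X/Y_{i+1}$ is equinumerous to their direct product, which is a $\mathcal{C}$\nobreakdash-group. So the proposition gives $U \leqslant Z$ with $U \in \mathcal{C}^{*}(Y_{i+1})$ and $U$ normal in~$X$; put $Y_{i} = U$. Then $Y_{i} \leqslant X_{i}$ and $Y_{i+1}/Y_{i} \in \mathcal{C}$.

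For the characterization of $\mathit{Poly}_{n}\mathcal{C}$, take a subnormal series $1 = X_{0} \leqslant X_{1} \leqslant \ldots \leqslant X_{n} = X$ and run the construction. We get $Y_{1} \leqslant X_{1}$ normal in~$X$ with $Y_{2}/Y_{1} \in \mathcal{C}$. Since $Y_{1} \leqslant X_{1} \in \mathcal{C}$, also $Y_{1} \in \mathcal{C}$. Hence $1 \leqslant Y_{1} \leqslant \ldots \leqslant Y_{n} = X$ is a normal series with $\mathcal{C}$\nobreakdash-factors. If the series is shorter than~$n$, pad it with repeated terms: $\mathcal{C}$ contains the trivial group, since it is nonempty and closed under subgroups.

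For Statement~2, concatenate the series. A $\mathit{Poly}_{m}$\nobreakdash-series of $X/Y$ lifts to a series from~$Y$ up to~$X$, and a $\mathit{Poly}_{n}$\nobreakdash-series of $Y/Z$ lifts to a series from~$Z$ up to~$Y$. Together with the subnormality of $Z \leqslant Y \leqslant X$, this gives a subnormal sequence $X_{1} = Z \leqslant \ldots \leqslant X_{m+n+1} = X$ with $m+n$ factors in~$\mathcal{C}$. Applying Statement~1 to this sequence (indexed so that $X_{1} = Z$) yields normal subgroups $Y_{1} \leqslant \ldots \leqslant Y_{m+n+1} = X$ with $Y_{1} \leqslant Z$ and all factors in~$\mathcal{C}$. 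Then $U = Y_{1}$ is normal in~$X$, and $X/U$ has a normal series of length $m+n$ with $\mathcal{C}$\nobreakdash-factors, so $U \in (\mathit{Poly}_{m+n}\mathcal{C})^{*}(X)$. The weakened Gruenberg condition is the case $m = n = 1$, since $\mathit{Poly}_{2}\mathcal{C} = \mathcal{C} \cdot \mathcal{C}$ by the normal-series form.

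For Statement~3, fix $x \neq 1$. If $x \notin Y$, the map $X \to X/Y$ works, and $X/Y \in \mathit{Poly}_{m}\mathcal{C} \subseteq \mathit{Poly}_{m+n}\mathcal{C}$. If $x \in Y$, choose $Z \in (\mathit{Poly}_{n}\mathcal{C})^{*}(Y)$ with $x \notin Z$, and apply Statement~2 to $Z \leqslant Y \leqslant X$. This gives $U \leqslant Z$, normal in~$X$, with $X/U \in \mathit{Poly}_{m+n}\mathcal{C}$ and $x \notin U$.

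The main obstacle is the cardinality hypothesis in Proposition~\ref{ep31} inside the induction of Statement~1. It is handled by building the $Y_{i}$ top-down, so that each $X/Y_{i+1}$ already has a normal series with $\mathcal{C}$\nobreakdash-factors and is therefore equinumerous to a $\mathcal{C}$\nobreakdash-group.
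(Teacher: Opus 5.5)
Your proof is correct and, for Statements~1 and~2, it is the paper's argument. The $Y_{i}$ are built top-down via Proposition~\ref{ep31}.1, with the cardinality hypothesis met because $X/Y_{i+1}$ is equinumerous to the direct product of the already constructed $\mathcal{C}$-factors. Statement~2 then follows by concatenating the lifted series and applying Statement~1. You also spell out the ``it follows'' characterization of $\mathit{Poly}_{n}\mathcal{C}$ by normal series, which the paper leaves implicit.

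The only divergence is in Statement~3. The paper applies Proposition~\ref{ep31}.2 to the pre-root class $\mathit{Poly}_{n}\mathcal{C}$, using that $X/Y$ is equinumerous to a $\mathcal{C}$-group, and then the inclusion $\mathit{Poly}_{n}\mathcal{C} \cdot \mathit{Poly}_{m}\mathcal{C} \subseteq \mathit{Poly}_{m+n}\mathcal{C}$. You instead derive Statement~3 directly from Statement~2, which spares you the fact that $\mathit{Poly}_{n}\mathcal{C}$ is itself pre-root. Both routes are equally short and valid.
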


\enlargethispage{-6pt}

\begin{proof}
1.\hspace{1ex}It~is~clear that we can take~$X_{n-1}$ as~$Y_{n-1}$. Suppose that the~subgroup~$Y_{i}$ ($2 \leqslant i \leqslant n-1$) is~already~defined. Since $Y_{i}/Y_{i} \cap X_{i-1} \cong Y_{i}X_{i-1}/X_{i-1} \leqslant X_{i}/X_{i-1} \in \mathcal{C}$ and~$\mathcal{C}$ is~closed under taking subgroups, we have $Y_{i}/Y_{i} \cap X_{i-1} \in \mathcal{C}$. As~noted in~Introduction, the~$(\mathit{Poly}_{n-i}\mathcal{C})$-group~$X/Y_{i}$ is~equinumerous to~a~$\mathcal{C}$\nobreakdash-group. It~follows from~this fact and~Proposition~\ref{ep31} (applied to~the~subgroups~$Y_{i}$ and~$X_{i-1} \cap Y_{i}$) that there exists a~subgroup $Y_{i-1} \in \mathcal{C}^{*}(Y_{i})$ normal in~$X$ and~lying in~$X_{i-1} \cap Y_{i}$. Obviously, this subgroup is~the~desired~one.

\pagebreak

2.\hspace{1ex}Since $X/Y \in \mathit{Poly}_{m}\mathcal{C}$ and~$Y/Z \in \mathit{Poly}_{n}\mathcal{C}$, the~group~$X$ has a~subnormal sequence of~subgroups $Z \leqslant Y_{1} \leqslant \ldots \leqslant Y_{n} = Y \leqslant X_{1} \leqslant \ldots \leqslant X_{m} = X$ with~factors from~$\mathcal{C}$. By~applying Statement~1 to~this sequence, we get the~desired subgroup~$U$.

\enlargethispage{-6pt}

3.\hspace{1ex}As~above, the~$(\mathit{Poly}_{m}\mathcal{C})$-group~$X/Y$ is~equinumerous to~a~$\mathcal{C}$\nobreakdash-group. Since $\mathcal{C} \subseteq \mathit{Poly}_{n}\mathcal{C}$ and~$\mathit{Poly}_{n}\mathcal{C}$ is~a~pre-root class, $X$~is~residually a~$(\mathit{Poly}_{n}\mathcal{C} \cdot \{X/Y\})$-group by~Proposition~\ref{ep31}. It~remains to~note that $\mathit{Poly}_{n}\mathcal{C} \cdot \{X/Y\} \subseteq \mathit{Poly}_{n}\mathcal{C} \cdot \mathit{Poly}_{m}\mathcal{C} \subseteq \mathit{Poly}_{m+n}\mathcal{C}$.
\end{proof}

\section{Some auxiliary statements}\label{esec04}

Throughout the~paper, let~us use the~following denotations:

\noindent
\makebox[7ex][l]{$\ker\sigma$}the~kernel of~a~homomorphism~$\sigma$;

\noindent
\makebox[7ex][l]{$\mathfrak{T}(X)$}the~torsion subset of~a~nilpotent group~$X$;

\noindent
\makebox[7ex][l]{$\zeta_{i}X$}the~$i$\nobreakdash-th~member of~the~upper central series of~a~nilpotent group~$X$.

\begin{eproposition}\label{ep41}
Given a~nilpotent group~$X$\textup{,} the~following statements hold.

\textup{1.}\hspace{1ex}The~set~$\mathfrak{T}(X)$ is~a~normal subgroup of~$X$~\textup{\cite[Theorem~5.2]{ClementMajewiczZyman2017}}.

\textup{2.}\hspace{1ex}If $X$ is~a~tor\-sion-free group\textup{,} then all members of~its~upper central series are~isolated~\textup{\cite[Lemma~5.9]{ClementMajewiczZyman2017}}.
\end{eproposition}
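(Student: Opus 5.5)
Both statements of Proposition~\ref{ep41} are standard facts, and the paper cites \cite{ClementMajewiczZyman2017} for them. The plan is to recall the classical arguments rather than develop anything new, proceeding by induction on the nilpotency class~$c$ of~$X$ in each case.

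For Statement~1, the key step is to show that the product of two torsion elements of~$X$ is again torsion. Closure under inverses is trivial, and normality is automatic, since conjugation preserves element orders. Let $x,y \in X$ have finite orders, and put $Z = \langle x, y\rangle$. I would prove by induction on the class that a nilpotent group generated by finitely many torsion elements is finite.
\begin{itemize}
\item The abelianization $Z/Z'$ is a finitely generated abelian group generated by torsion elements, hence finite.
\item The factors $\gamma_{i}Z/\gamma_{i+1}Z$ of the lower central series are images of the tensor powers $(Z/Z')^{\otimes i}$ under the iterated commutator map. Here I use multilinearity of commutators modulo the next term of the series. So each factor is a finitely generated torsion abelian group, hence finite.
\item Therefore $Z$ is finite, and in particular $xy$ has finite order.
\end{itemize}
The main obstacle is this multilinearity step. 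It is routine, and it is exactly where nilpotency enters, since the lower central series terminates at~$1$.

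For Statement~2, suppose $X$ is torsion-free. I need to show that if $x^{m} \in \zeta_{i}X$ with $m \ge 1$, then $x \in \zeta_{i}X$. The key lemma is the case $i=1$: in a torsion-free nilpotent group, $[x^{m}, y] = 1$ implies $[x,y]=1$.
\begin{itemize}
\item The equation $x^{m} = y^{-1}x^{m}y = (y^{-1}xy)^{m}$ reduces the lemma to uniqueness of roots: $a^{m} = b^{m}$ implies $a = b$.
\item I prove uniqueness of roots by induction on the class of $\langle a,b\rangle$. Since $b^{-1}ab$ and~$a$ have the same $m$-th power, they lie in the subgroup $\langle a, [a,b]\rangle \le \langle a, \gamma_{2}\langle a,b\rangle\rangle$. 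That subgroup has smaller class, so induction gives $b^{-1}ab = a$.
\item Now $a$ and~$b$ commute, so $(ab^{-1})^{m} = 1$, and torsion-freeness gives $a = b$.
\end{itemize}

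For general~$i$, I induct using the fact that $\zeta_{i+1}X/\zeta_{1}X = \zeta_{i}(X/\zeta_{1}X)$. This requires $X/\zeta_{1}X$ to be torsion-free, which is exactly the isolatedness of~$\zeta_{1}X$. Indeed, if $x^{m} \in \zeta_{1}X$, then $[x^{m},y]=1$ for all $y \in X$, so the lemma gives $[x,y] = 1$ for all~$y$, that is, $x \in \zeta_{1}X$. Applying the inductive hypothesis to the torsion-free nilpotent group $X/\zeta_{1}X$ then shows that each $\zeta_{i}X$ is isolated.
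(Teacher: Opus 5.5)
The paper gives no proof of this proposition. Both parts are quoted from the monograph of Clement, Majewicz, and Zyman (Theorem~5.2 and Lemma~5.9), so there is no internal argument to compare yours with.

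Your self-contained proofs are the standard ones and are correct. For Statement~1 you show that a nilpotent group generated by finitely many torsion elements is finite, via the epimorphisms $(Z/Z')^{\otimes i}\to\gamma_{i}Z/\gamma_{i+1}Z$. For Statement~2 you use uniqueness of roots to get $[x^{m},y]=1\Rightarrow[x,y]=1$, hence isolatedness of $\zeta_{1}X$, and then induct through the torsion-free quotient $X/\zeta_{1}X$.

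Two details in the uniqueness-of-roots induction are worth a line if you write it out, with $G=\langle a,b\rangle$:
\begin{itemize}
\item The induction must start at class~$1$. For abelian $G$, the subgroup $\langle a,\gamma_{2}G\rangle=\langle a\rangle$ need not have smaller class. There $b^{-1}ab=a$ holds trivially anyway.
\item For class $c\geqslant 2$, the drop in class comes from the inclusions $\gamma_{i}(\langle a\rangle\gamma_{2}G)\leqslant\gamma_{i+1}G$ for $i\geqslant 2$. These follow by induction on $i$, starting from the fact that $\langle a\rangle\gamma_{2}G/\gamma_{3}G$ is abelian, being generated by one element and a central subgroup.
\end{itemize}
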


\begin{eproposition}\label{ep42}
If $X$ is~a~tor\-sion-free nilpotent group and~$Y$ is~a~non-triv\-i\-al locally cyclic subgroup of~$X$\textup{,} then $Y \leqslant \zeta_{\lambda(Y)}X$ and~$Y \cap \zeta_{\lambda(Y)-1}X = 1$ for~some number $\lambda(Y) \geqslant 1$.
\end{eproposition}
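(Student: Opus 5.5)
Since $X$ is nilpotent, its upper central series $1 = \zeta_{0}X \leqslant \zeta_{1}X \leqslant \ldots \leqslant \zeta_{c}X = X$ is finite. Hence the set of indices $i$ with $Y \leqslant \zeta_{i}X$ is non-empty. Let $\lambda = \lambda(Y)$ be the least such index. As $Y \ne 1$, we have $\lambda \geqslant 1$, and $Y \leqslant \zeta_{\lambda}X$ by construction. It remains to prove $Y \cap \zeta_{\lambda-1}X = 1$, which I would do by contradiction.

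Suppose there is an element $1 \ne y_{0} \in Y \cap \zeta_{\lambda-1}X$. By the choice of $\lambda$, there is also $y \in Y \setminus \zeta_{\lambda-1}X$. The key step is to exploit local cyclicity: the subgroup $\langle y_{0}, y\rangle$ is cyclic, say generated by $z \in Y$. Then $y_{0} = z^{k}$ and $y = z^{l}$ for some integers $k,l$. Since $X$ is torsion-free and $y_{0} \ne 1$, we get $k \ne 0$.

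By Statement~2 of Proposition~\ref{ep41}, the subgroup $\zeta_{\lambda-1}X$ is isolated in $X$. Thus $z^{k} \in \zeta_{\lambda-1}X$ with $k \ne 0$ forces $z \in \zeta_{\lambda-1}X$. Consequently $y = z^{l} \in \zeta_{\lambda-1}X$, contradicting the choice of $y$. Therefore $Y \cap \zeta_{\lambda-1}X = 1$.

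The only delicate point is precisely the isolation of the members of the upper central series, which is supplied by Proposition~\ref{ep41}. Everything else uses only that two elements of a locally cyclic group generate a cyclic subgroup, together with torsion-freeness to guarantee $k \neq 0$.
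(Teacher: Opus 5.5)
Your proof is correct and uses the same two ingredients as the paper's: any two elements of $Y$ generate a cyclic subgroup, and $\zeta_{\lambda-1}X$ is isolated by Proposition~\ref{ep41}. The only difference is that you define $\lambda(Y)$ as the least index with $Y \leqslant \zeta_{\lambda}X$, which makes the inclusion automatic and spares you the paper's second (``similarly'') step, where it rules out elements of $Y$ lying above the level of the chosen element $y$ (minor point: $k \neq 0$ already follows from $y_{0} \neq 1$, without torsion-freeness).
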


\begin{proof}
Since $X$ is~nilpotent and~$Y \ne 1$, there exist a~number $\lambda(Y) \geqslant 1$ and~an~element $y \in Y \setminus \{1\}$ such that $y \in \zeta_{\lambda(Y)}X \setminus \zeta_{\lambda(Y)-1}X$. If~$y^{\prime} \in Y \cap \zeta_{\lambda(Y)-1}X$, then~$y$ and~$y^{\prime}$ generate a~cyclic subgroup~$Z$ of~$Y$. The~subgroup~$\zeta_{\lambda(Y)-1}X$ is~isolated in~the~tor\-sion-free group~$X$ by~Proposition~\ref{ep41}. Since $y \in Z \setminus \zeta_{\lambda(Y)-1}X$, it~follows that $Z \cap \zeta_{\lambda(Y)-1}X = 1$ and~$y^{\prime} = 1$. Hence, $Y \cap \zeta_{\lambda(Y)-1}X = 1$. Similarly, if~$y^{\prime\prime} \in Y \cap \zeta_{\lambda(Y)+1}X \setminus \zeta_{\lambda(Y)}X$, then the~equality $\operatorname{sgp}\{y,y^{\prime\prime}\} \cap \zeta_{\lambda(Y)}X = 1$ holds, which contradicts the~choice of~$y$. Thus, $Y \leqslant \zeta_{\lambda(Y)}X$.
\end{proof}

\begin{eproposition}\label{ep43}
\textup{\cite[Proposition~3.1]{Sokolov2024JA}}
Let $\mathcal{C}$ be~a~class of~groups closed under taking subgroups and~finite direct products. If~$X$ is~a~group with~subgroups~$Y$ and~$Z$\textup{,} then the~following statements hold.

\textup{1.}\hspace{1ex}If $Y \in \mathcal{C}^{*}(X)$\textup{,} then $Y \cap Z \in \mathcal{C}^{*}(Z)$. As~a~result\textup{,} any subgroup of~a~residually $\mathcal{C}$\nobreakdash-group is~itself residually a~$\mathcal{C}$\nobreakdash-group.

\textup{2.}\hspace{1ex}If $Y,Z \in \mathcal{C}^{*}(X)$\textup{,} then $Y \cap Z \in \mathcal{C}^{*}(X)$. As~a~result\textup{,} any finite subset of~non-triv\-i\-al elements of~a~residually $\mathcal{C}$\nobreakdash-group does~not~intersect some subgroup of~$\mathcal{C}^{*}(X)$.
\end{eproposition}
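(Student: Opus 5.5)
Both statements are elementary and I will prove them directly from the definition of~$\mathcal{C}^{*}(X)$ as the family of kernels of homomorphisms of~$X$ onto $\mathcal{C}$\nobreakdash-groups. Equivalently, $\mathcal{C}^{*}(X)$ is the family of normal subgroups $N$ of~$X$ with $X/N \in \mathcal{C}$.

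For Statement~1, I take $Y \in \mathcal{C}^{*}(X)$ and an arbitrary subgroup $Z \leqslant X$. Then $Y \cap Z$ is normal in~$Z$, being the kernel of the restriction to~$Z$ of the natural homomorphism $X \to X/Y$. By the isomorphism $Z/(Y \cap Z) \cong ZY/Y \leqslant X/Y$ and the closure of~$\mathcal{C}$ under subgroups, $Z/(Y \cap Z) \in \mathcal{C}$, so $Y \cap Z \in \mathcal{C}^{*}(Z)$. For the consequence, let $X$ be residually a $\mathcal{C}$\nobreakdash-group and let $z \in Z \setminus \{1\}$. I choose $Y \in \mathcal{C}^{*}(X)$ with $z \notin Y$; then $Y \cap Z \in \mathcal{C}^{*}(Z)$ still omits~$z$.

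For Statement~2, I take $Y, Z \in \mathcal{C}^{*}(X)$. The homomorphism $X \to X/Y \times X/Z$, $x \mapsto (xY, xZ)$, has kernel $Y \cap Z$. Hence $X/(Y \cap Z)$ embeds into $X/Y \times X/Z$, which lies in~$\mathcal{C}$ by closure under finite direct products, and therefore $X/(Y \cap Z) \in \mathcal{C}$ by closure under subgroups. For the consequence, let $x_{1}, \ldots, x_{k}$ be non-trivial elements of a residually $\mathcal{C}$\nobreakdash-group~$X$. For each~$i$ I pick $Y_{i} \in \mathcal{C}^{*}(X)$ with $x_{i} \notin Y_{i}$. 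By induction on~$k$, using the first part of Statement~2, the intersection $\bigcap_{i=1}^{k} Y_{i}$ lies in~$\mathcal{C}^{*}(X)$, and it contains none of the~$x_{i}$.

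There is no real obstacle here. The only point needing care is to check normality of the intersections and to use subgroup closure after embedding the quotients into the product. For the empty set of elements, any member of~$\mathcal{C}^{*}(X)$ works; this family is non-empty whenever $X$ has at least one $\mathcal{C}$\nobreakdash-quotient, in particular when $X$ is residually a $\mathcal{C}$\nobreakdash-group and non-trivial.
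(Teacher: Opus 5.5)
Your proof is correct, and it is the standard argument. The paper gives no proof of its own here and only cites Proposition~3.1 of Sokolov's 2024 paper; the reasoning is exactly yours: the isomorphism $Z/(Y \cap Z) \cong ZY/Y \leqslant X/Y$ for Statement~1, and the embedding of $X/(Y \cap Z)$ into $X/Y \times X/Z$ for Statement~2 (Remak's theorem, which the paper itself uses in Proposition~\ref{ep31}).

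One small simplification: since $\mathcal{C}$ is non-empty and closed under subgroups, the trivial group lies in $\mathcal{C}$. Hence $X \in \mathcal{C}^{*}(X)$ always, and your caveat about the empty set of elements is unnecessary.
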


Recall (see, e.g.,~\cite[Section~2.2]{DixonKurdachenkoSubbotin2017} that a~group~$X$ is~said to~have a~\emph{finite $0$\nobreakdash-rank} if~there exists a~finite subnormal series of~$X$ each of~whose factor is~either a~periodic or~an~infinite cyclic group.

\begin{eproposition}\label{ep44}
\textup{\cite[Proposition~18]{Tumanova2015IVM}}
Let $\mathcal{C}$ be~a~class of~tor\-sion-free groups closed under taking subgroups and~finite direct products. If~$X$ is~residually a~$\mathcal{C}$\nobreakdash-group and~$Y$ is~a~subgroup of~$X$ having a~finite $0$\nobreakdash-rank\textup{,} then there exists a~subgroup $Z \in \mathcal{C}^{*}(X)$ such that $Z \cap Y = 1$.
\end{eproposition}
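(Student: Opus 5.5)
The plan is an extremal argument on the torsion-free rank. For a group $W$ of finite $0$-rank, let $h(W)$ denote the number of infinite cyclic factors in a finite subnormal series of $W$ whose factors are periodic or infinite cyclic. I will use the standard facts about this invariant (see, e.g.,~\cite[Section~2.2]{DixonKurdachenkoSubbotin2017}):

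1) $h(W)$ does not depend on the choice of the series (Schreier refinement / Jordan--H\"older argument);

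2) every subgroup $V$ of $W$ has finite $0$-rank, with $h(V) \leqslant h(W)$ (intersect the series with $V$; subgroups of periodic, respectively infinite cyclic, factors are periodic, respectively trivial or infinite cyclic);

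3) if $V$ is normal in $W$, then $W/V$ has finite $0$-rank and $h(W) = h(V) + h(W/V)$.

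Since $X$ is residually a $\mathcal{C}$-group, the family $\mathcal{C}^{*}(X)$ is non-empty. For every $Z \in \mathcal{C}^{*}(X)$, the subgroup $Y \cap Z$ has finite $0$-rank by fact 2, so $h(Y \cap Z)$ is a non-negative integer. Choose $Z \in \mathcal{C}^{*}(X)$ with $h(Y \cap Z)$ minimal. I claim that $Y \cap Z = 1$.

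Suppose instead that $y \in Y \cap Z$ and $y \neq 1$. Since $X$ is residually a $\mathcal{C}$-group, there is $Z^{\prime} \in \mathcal{C}^{*}(X)$ with $y \notin Z^{\prime}$. By Proposition~\ref{ep43}, $Z \cap Z^{\prime} \in \mathcal{C}^{*}(X)$. Put $W = Y \cap Z$ and $V = W \cap Z^{\prime} = Y \cap (Z \cap Z^{\prime})$. Because $Z^{\prime}$ is normal in $X$, the subgroup $V$ is normal in $W$. Moreover, $W/V$ embeds into $X/(Z \cap Z^{\prime})$, which is a $\mathcal{C}$-group and hence torsion-free. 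The quotient $W/V$ is non-trivial, since it contains the image of $y \notin Z^{\prime}$.

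Now $W/V$ is a non-trivial torsion-free group of finite $0$-rank, and such a group has $h(W/V) \geqslant 1$. Indeed, if $h(W/V) = 0$, then all factors of its series are periodic, so $W/V$ is periodic; being torsion-free, it would be trivial. By fact 3 this gives $h(V) < h(W)$, that is, $h\bigl(Y \cap (Z \cap Z^{\prime})\bigr) < h(Y \cap Z)$. This contradicts the minimality in the choice of $Z$, because $Z \cap Z^{\prime} \in \mathcal{C}^{*}(X)$. Hence $Y \cap Z = 1$.

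The only step needing real care is the invariance and additivity of $h$ (facts 1 and 3). These are routine consequences of the Schreier refinement theorem, since extensions and subquotients of periodic and infinite cyclic groups behave as required. Everything else uses only Proposition~\ref{ep43} and the torsion-freeness of $\mathcal{C}$-groups.
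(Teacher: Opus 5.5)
Your proof is correct. The paper gives no argument of its own for this statement; it quotes it from \cite[Proposition~18]{Tumanova2015IVM}. So there is no in-paper proof to compare against, and your extremal argument on the torsion-free rank $h$ is a complete, self-contained proof.

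Facts 1--3 hold for the reasons you give. When a series is refined, an infinite cyclic factor splits into exactly one infinite cyclic factor, with all the other pieces finite or trivial, while periodic factors remain periodic. The Schreier refinement theorem therefore gives invariance of $h$. Concatenating a series of $V$ with the lifted series of $W/V$ gives additivity.

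The decisive step is also sound. A non-trivial torsion-free group of finite $0$-rank has $h \geqslant 1$. Hence intersecting with a further kernel $Z'$ that misses a non-trivial element of $Y \cap Z$ strictly lowers $h(Y \cap Z)$. Meanwhile Proposition~\ref{ep43} keeps $Z \cap Z'$ in $\mathcal{C}^{*}(X)$.

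Your minimal-counterexample formulation is equivalent to an induction on $h(Y)$. In that version you separate one non-trivial element, apply the hypothesis to the subgroup of smaller rank, and intersect the two kernels.

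One cosmetic point concerns non-emptiness of $\mathcal{C}^{*}(X)$. It holds even when $X = 1$, because $\mathcal{C}$ is non-empty and closed under subgroups, so $1 \in \mathcal{C}$ and $X \in \mathcal{C}^{*}(X)$. Residual $\mathcal{C}$-ness is really used only to separate the element $y$.
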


The~next proposition follows from~the~results of~\cite{Cohen1974JAMS}.

\begin{eproposition}\label{ep45}
Let $\mathcal{T}$ contain only one edge~$e$. If~$N$ is~a~normal subgroup of~$\pi_{1}(\mathcal{G}(\mathcal{T}))$ that intersects trivially with~$H_{+e}$ and~$H_{-e}$\textup{,} then it splits as~the~free product of~a~free subgroup\textup{,} the~subgroups~$N \cap G_{e(1)}$ and~$N \cap G_{e(-1)}$\textup{,} and~also some subgroups of~the~form $x^{-1}(N \cap G_{e(\varepsilon)})x$\textup{,} where $x \in \pi_{1}(\mathcal{G}(\mathcal{T})) \setminus \{1\}$ and~$\varepsilon = \pm 1$.
\end{eproposition}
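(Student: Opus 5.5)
Proposition~\ref{ep45} is attributed to~\cite{Cohen1974JAMS}, so the plan is to obtain it from the Bass--Serre description of subgroups of a fundamental group of a graph of groups, applied to a normal subgroup.

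Put $G = \pi_{1}(\mathcal{G}(\mathcal{T})) = G_{e(1)} *_{H} G_{e(-1)}$, where $H = H_{+e} = H_{-e}$, and let $\Gamma$ be the Bass--Serre tree of this splitting. The vertices of $\Gamma$ are the cosets $xG_{e(\varepsilon)}$ and the edges are the cosets $xH$. The stabilizers are $x G_{e(\varepsilon)} x^{-1}$ for vertices and $xHx^{-1}$ for edges.

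First, the normal subgroup $N$ acts on $\Gamma$ with trivial edge stabilizers. Indeed, the stabilizer of the edge $xH$ in $N$ is
\[
N \cap xHx^{-1} = x(N \cap H)x^{-1},
\]
since $N$ is normal, and $N \cap H = 1$ by hypothesis.

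Next, I apply the structure theorem for groups acting on trees (Bass--Serre, or the Karrass--Solitar subgroup theorem, which is what Cohen uses). It gives that $N$ is the fundamental group of the quotient graph of groups $N\backslash\Gamma$, and all edge groups of this graph are trivial. A graph of groups with trivial edge groups has fundamental group equal to the free product of the vertex groups and the free group $\pi_{1}(N\backslash\Gamma)$ of the underlying graph. The vertex groups here are representatives of the $N$-stabilizers of vertices, one for each $N$-orbit of vertices of $\Gamma$. Each has the form
\[
N \cap xG_{e(\varepsilon)}x^{-1} = x(N \cap G_{e(\varepsilon)})x^{-1},
\]
again by normality. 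Writing $x^{-1}$ in place of $x$ gives the form required in the statement.

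It remains to include the two factors $N \cap G_{e(1)}$ and $N \cap G_{e(-1)}$ exactly. The base vertices $1\cdot G_{e(1)}$ and $1\cdot G_{e(-1)}$ lie in different $N$-orbits, because they have different $G$-types. We may therefore choose these two vertices as orbit representatives. Their stabilizers are precisely $N \cap G_{e(1)}$ and $N \cap G_{e(-1)}$. All other representatives give conjugates by elements $x \neq 1$; a trivial stabilizer simply contributes a trivial factor.

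The only delicate point is bookkeeping. The free-product decomposition depends on choosing a maximal tree and a fundamental transversal in $\Gamma$. We should arrange that this transversal contains the edge $1\cdot H$, which joins the two base vertices. Then both untwisted factors appear as they stand, and not as conjugates of themselves. This choice is always possible: every subtree of $\Gamma$ that injects into the quotient extends to a lift of a maximal tree of $N\backslash\Gamma$, and the single edge $1\cdot H$ injects because its endpoints lie in distinct orbits. Apart from this, the proof consists of quoting the structure theorem.
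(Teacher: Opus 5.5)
Your argument is correct and follows essentially the paper's route. The paper simply invokes Cohen's subgroup theorem for generalized free products, which describes a subgroup as a graph of groups with vertex groups $N\cap xG_{e(\varepsilon)}x^{-1}$ and edge groups $N\cap xH_{+e}x^{-1}$; your Bass--Serre derivation is the standard proof of that theorem, specialized to the case where normality and $N\cap H_{+e}=1$ make every edge group trivial, and your lift of a maximal tree through the edge $1\cdot H$ correctly makes explicit the choice of representatives that lets $N\cap G_{e(1)}$ and $N\cap G_{e(-1)}$ appear unconjugated, which the paper leaves implicit.
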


Using Proposition~\ref{ep45} and~induction on~the~number of~vertices, we can deduce

\begin{eproposition}\label{ep46}
Let $\mathcal{T}$ be~finite. If~$N$ is~a~normal subgroup of~$\pi_{1}(\mathcal{G}(\mathcal{T}))$ that intersects trivially with~each subgroup~$H_{\varepsilon e}$ \textup{(}$e \in \mathcal{E}$\textup{,} $\varepsilon = \pm 1$\textup{),} then it splits as~the~free product of~a~free subgroup\textup{,} the~subgroups~$N \cap G_{v}$ \textup{(}$v \in \mathcal{V}$\textup{),} and~also some subgroups of~the~form $x^{-1}(N \cap G_{v})x$ \textup{(}$v \in \mathcal{V}$\textup{,} $x \in \pi_{1}(\mathcal{G}(\mathcal{T})) \setminus \{1\}$\textup{)}.
\end{eproposition}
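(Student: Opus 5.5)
We argue by induction on the number $n$ of vertices of the finite tree $\mathcal{T}$. If $n = 1$, then $\pi_{1}(\mathcal{G}(\mathcal{T})) = G_{v}$ for the unique vertex $v$. Since $N$ is normal in $G_{v}$, it coincides with $N \cap G_{v}$, and the statement holds trivially (the free factor and the conjugate factors are absent). If $n = 2$, the statement is exactly Proposition~\ref{ep45}. So let $n \geqslant 3$ and assume the statement for all trees with fewer vertices.

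For the inductive step, choose a leaf $w$ of $\mathcal{T}$ and let $f$ be the unique edge incident to it. Put $\mathcal{T}^{\prime} = \mathcal{T} \setminus \{w, f\}$ and $P = \pi_{1}(\mathcal{G}(\mathcal{T}^{\prime}))$, where $\mathcal{G}(\mathcal{T}^{\prime})$ is the restriction of $\mathcal{G}(\mathcal{T})$. From the presentations one sees that $\pi_{1}(\mathcal{G}(\mathcal{T}))$ is the generalized free product of $P$ and $G_{w}$, amalgamating the edge subgroup $H_{f}$ via $\varphi_{\pm f}$. In other words, it is the fundamental group of a one-edge graph of groups with vertex groups $P$ and $G_{w}$, whose edge subgroups are exactly $H_{+f}$ and $H_{-f}$. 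The normal subgroup $N$ meets both of these trivially.

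Proposition~\ref{ep45} then shows that $N$ is the free product of a free group $F_{0}$, of $N \cap G_{w}$ and its conjugates, and of $N \cap P$ and some conjugates $x^{-1}(N \cap P)x$. Now $N \cap P$ is normal in $P$ and intersects trivially every edge subgroup of $\mathcal{G}(\mathcal{T}^{\prime})$, since these are edge subgroups of $\mathcal{G}(\mathcal{T})$. By the induction hypothesis, $N \cap P$ is the free product of a free group, the subgroups $N \cap G_{v}$ ($v \in \mathcal{V} \setminus \{w\}$), and conjugates $y^{-1}(N \cap G_{v})y$ with $y \in P \setminus \{1\}$. Here we use $(N \cap P) \cap G_{v} = N \cap G_{v}$.

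Conjugating this decomposition by $x$ gives a decomposition of $x^{-1}(N \cap P)x$ into a free group and subgroups $(yx)^{-1}(N \cap G_{v})(yx)$. Substituting all these decompositions into the decomposition of $N$ yields a free product of the following factors:

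(a) a free group, namely the free product of $F_{0}$ with all the free factors just obtained;

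(b) the subgroups $N \cap G_{v}$ for all $v \in \mathcal{V}$, coming from $N \cap P$ and from the factor $N \cap G_{w}$;

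(c) conjugates of the subgroups $N \cap G_{v}$.

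For (a) we use that a free product of free groups is free. A conjugate in (c) whose conjugating element equals $1$ is simply a copy of $N \cap G_{v}$. Since the statement only asks for ``some subgroups of the form $x^{-1}(N \cap G_{v})x$'', it is harmless to ensure that each $N \cap G_{v}$ appears once unconjugated and that all other conjugating elements are non-trivial.

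The main point requiring care is the identification of $\pi_{1}(\mathcal{G}(\mathcal{T}))$ with the amalgam $P \ast_{H_{f}} G_{w}$, together with the fact that $P$ embeds in it. This follows by comparing presentations and from the embedding of vertex groups \cite[Theorem~1]{KarrassSolitar1970TAMS}. The bookkeeping of conjugating elements that may become trivial after composition is a minor issue.
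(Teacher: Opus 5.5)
Your argument is correct and is essentially the paper's own. The paper only says that Proposition~\ref{ep46} follows from Proposition~\ref{ep45} by induction on the number of vertices, and splitting off a leaf to view $\pi_{1}(\mathcal{G}(\mathcal{T}))$ as $P \ast_{H_{f}} G_{w}$, then decomposing $N \cap P$ and its conjugates by the inductive hypothesis, is exactly that induction. Your bookkeeping remark also holds: if some $yx = 1$, then $N \cap G_{v}$ would occur twice as a free factor, hence would be trivial and can be discarded.
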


It~is~easy to~see that if~$H_{\varepsilon e}$ lies in~the~center of~$G_{e(\varepsilon)}$ for~any $e \in \mathcal{E}$ and~$\varepsilon = \pm 1$, then $S = \operatorname{sgp}\{h\varphi_{+e}(h\varphi_{-e})^{-1} \mid e \in \mathcal{E},\ h \in H_{e}\}$ is~a~normal subgroup of~the~direct product~$D$ of~the~groups~$G_{v}$ ($v \in \mathcal{V}$). Following~\cite{SokolovTumanova2019AL}, we will~call the~quotient group $\mathfrak{D}(\mathcal{G}(\mathcal{T})) = D/S$ the~\emph{generalized direct product associated with~the~graph of~groups~$\mathcal{G}(\mathcal{T})$}.

\begin{eproposition}\label{ep47}
\textup{\cite[Theorem~1]{SokolovTumanova2019AL}}
If $H_{\varepsilon e}$ lies in~the~center of~$G_{e(\varepsilon)}$ for~any $e \in \mathcal{E}$ and~$\varepsilon = \pm 1$\textup{,} then each group~$G_{v}$ \textup{(}$v \in \mathcal{V}$\textup{)} can~be~embedded in~$\mathfrak{D}(\mathcal{G}(\mathcal{T}))$ by~the~identity mapping of~its~generators.
\end{eproposition}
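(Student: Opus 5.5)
The plan is to prove that $G_{v} \cap S = 1$ in $D$ for every $v \in \mathcal{V}$. First I would reduce to the case of a finite tree. Every element of $S$ is a finite product of generators $h\varphi_{+e}(h\varphi_{-e})^{-1}$, so it involves only finitely many edges. Let $x \in G_{v} \cap S$. Take a finite subtree $\mathcal{T}_{0}$ containing $v$ and all edges occurring in some expression of $x$. Then $x$ lies in $G_{v} \cap S_{0}$, where $S_{0}$ is the corresponding subgroup of the direct product $D_{0}$ of the groups $G_{u}$ ($u \in \mathcal{V}(\mathcal{T}_{0})$). Since $D_{0}$ is a direct factor of $D$, it suffices to show that $G_{v} \cap S_{0} = 1$ in $D_{0}$, which is the finite case.

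For a finite tree I would argue by induction on the number of vertices; the case of one vertex is trivial. Choose a leaf $w$ joined by an edge $e$ to a vertex $u$, say $w = e(-1)$ and $u = e(1)$. Let $\mathcal{T}'$ be the tree obtained by deleting $w$ and $e$, with direct product $D'$ and subgroup $S'$. Then $D = D' \times G_{w}$ and $S = S' S_{e}$, where $S_{e} = \{h\varphi_{+e}(h\varphi_{-e})^{-1} \mid h \in H_{e}\}$. Centrality of the edge subgroups makes $S_{e}$ a subgroup and makes $S'$, $S_{e}$, $S$ normal in $D$.

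By the third isomorphism theorem,
\[
D/S \cong (D'/S' \times G_{w})/K, \qquad K = \{(\bar h\varphi_{+e},\,(h\varphi_{-e})^{-1}) \mid h \in H_{e}\},
\]
where the bar denotes the image in $D'/S'$. By the induction hypothesis, $G_{u}$ embeds in $D'/S'$, so $h \mapsto \bar h\varphi_{+e}$ is injective. Moreover, its image is central in $D'/S'$, since $H_{+e}$ is central in $G_{u}$ and commutes with the other direct factors. Hence $D/S$ is the central product of $D'/S'$ and $G_{w}$ amalgamating the isomorphic central subgroups $\overline{H_{+e}}$ and $H_{-e}$.

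The key (and essentially only) verification is that both factors embed in this central product. Suppose $(g,1) \in K$. Then $(h\varphi_{-e})^{-1} = 1$, so $h = 1$ and hence $g = 1$. Suppose instead $(1,g) \in K$. Then $\bar h\varphi_{+e} = 1$, which forces $h = 1$ by the injectivity established above, so again $g = 1$. Therefore $G_{w}$ embeds in $D/S$. Furthermore, $D'/S'$ embeds in $D/S$, and composing with the embeddings $G_{v} \to D'/S'$ ($v \neq w$) from the induction hypothesis gives the claim for all vertices. All maps involved are induced by the identity on generators, as the statement requires.

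The main obstacle is only bookkeeping. One must check that the isomorphism $D/S \cong (D'/S' \times G_{w})/K$ is compatible with the natural maps of the $G_{v}$. One must also note that injectivity of $H_{+e}$ into $D'/S'$ is exactly what the induction hypothesis supplies; without it the amalgamated subgroups would fail to be isomorphic.
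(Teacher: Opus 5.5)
Your proof is correct and complete. The paper gives no proof of this statement; it imports it as Theorem~1 of \cite{SokolovTumanova2019AL}, so there is no internal argument to compare yours against. Your two steps together give a self-contained proof that uses only the third isomorphism theorem:
\begin{enumerate}
\item[(i)] reduce to a finite subtree, which is legitimate because $S$ is generated by elements supported on at most two coordinates;
\item[(ii)] induct by removing a leaf, passing through $D/S \cong (D'/S' \times G_{w})/K$ and checking $K \cap (D'/S' \times 1) = 1$ and $K \cap (1 \times G_{w}) = 1$.
\end{enumerate}

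The central-product language is decorative: those two intersection computations are all the argument needs. The induction hypothesis is used exactly once, to make $h \mapsto \overline{h\varphi_{+e}}$ injective. Your compatibility check is also right: the maps $g \mapsto gS' \mapsto gS$ agree with the natural map $G_{v} \to D/S$, so the embedding is indeed induced by the identity on generators.

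Step (i) establishes the result for infinite trees as well. That matches the generality in which the paper states it, although the paper only applies it to finite $\mathcal{T}$, in Proposition~\ref{ep48}.
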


Let $\omega$ be~the~mapping of~words that continues the~identity mapping of~the~generators of~$\pi_{1}(\mathcal{G}(\mathcal{T}))$ into~$\mathfrak{D}(\mathcal{G}(\mathcal{T}))$. Clearly, $\omega$~takes all defining relations of~$\pi_{1}(\mathcal{G}(\mathcal{T}))$ to~the~equalities valid in~$\mathfrak{D}(\mathcal{G}(\mathcal{T}))$ and,~therefore, defines a~homomorphism~$\sigma$. For~each $v \in \mathcal{V}$, consider the~diagram
\[
\begin{CD}
G_{v} @>{\iota}>> \pi_{1}(\mathcal{G}(\mathcal{T})) \\
@| @VV{\sigma}V \\ 
G_{v} @>{\iota}>> \mathfrak{D}(\mathcal{G}(\mathcal{T})),
\end{CD}
\]
where $\iota$ denotes the~embeddings by~the~identity mappings of~the~generators. Since every such diagram is~commutative, $\sigma$~acts injectively on~all subgroups~$G_{v}$ ($v \in \mathcal{V}$).

\begin{eproposition}\label{ep48}
Let $\mathcal{N}$ be~a~pre-root class of~nilpotent groups closed under taking quotient groups\textup{,} and~let $\mathcal{N}^{\mathfrak{tf}}$ be~the~class of~all tor\-sion-free $\mathcal{N}$\nobreakdash-groups. If~$\mathcal{T}$ is~finite\textup{,} $G_{v} \in \mathcal{N}^{\mathfrak{tf}}$ for~each $v \in \mathcal{V}$\textup{,} and~$H_{\varepsilon e}$ lies in~the~center of~$G_{e(\varepsilon)}$ for~any $e \in \mathcal{E}$ and~$\varepsilon = \pm 1$\textup{,} then there exists a~homomorphism of~$\pi_{1}(\mathcal{G}(\mathcal{T}))$ onto~an~$\mathcal{N}^{\mathfrak{tf}}$\nobreakdash-group that acts injectively on~all subgroups~$G_{v}$ \textup{(}$v \in \mathcal{V}$\textup{)}.
\end{eproposition}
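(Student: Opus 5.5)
The natural candidate is the homomorphism $\sigma\colon \pi_{1}(\mathcal{G}(\mathcal{T})) \to \mathfrak{D}(\mathcal{G}(\mathcal{T}))$ constructed just before the statement. By Proposition~\ref{ep47} and the commutative diagrams above, $\sigma$ is injective on every $G_{v}$. What remains is to show that $\mathfrak{D}(\mathcal{G}(\mathcal{T}))$, or at least the image of~$\sigma$, is a tor\-sion-free $\mathcal{N}$\nobreakdash-group. The map $\sigma$ is surjective, since $\mathfrak{D}(\mathcal{G}(\mathcal{T})) = D/S$ is generated by the images of the~$G_{v}$.

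\emph{Membership in~$\mathcal{N}$.} Since $\mathcal{T}$ is finite, $D$ is a finite direct product of $\mathcal{N}$\nobreakdash-groups, so $D \in \mathcal{N}$. Because $\mathcal{N}$ is closed under quotients, $D/S \in \mathcal{N}$ as well.

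\emph{Tor\-sion-freeness.} This is the main obstacle, since a quotient of a tor\-sion-free nilpotent group by a central subgroup need not be tor\-sion-free. I would argue by induction on~$|\mathcal{V}|$, attaching one leaf at a time. Let $v$ be a leaf joined by an edge $e$ to the rest of the tree $\mathcal{T}'$. Then $\mathfrak{D}(\mathcal{G}(\mathcal{T}))$ is the central amalgamated product, that is, the quotient of $\mathfrak{D}(\mathcal{G}(\mathcal{T}')) \times G_{v}$ by the subgroup $\{(h\varphi_{+e}, (h\varphi_{-e})^{-1})\}$. Here the relevant edge subgroup is central in both factors: it is central in $G_{e(\varepsilon)}$, and by construction $G_{e(\varepsilon)}$ embeds in $\mathfrak{D}(\mathcal{G}(\mathcal{T}'))$ with its center going to the center. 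So it suffices to prove the two-factor case: if $A, B$ are tor\-sion-free nilpotent and $H \leqslant Z(A)$, $K \leqslant Z(B)$ are identified via $\varphi\colon H \to K$, then $P = (A \times B)/\{(h, (h\varphi)^{-1})\}$ is tor\-sion-free.

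In this generality the claim fails unless $H$ is isolated in $A$ (for example, $A = B = \mathbb{Z}$ with $H = 2\mathbb{Z}$ and $K = 3\mathbb{Z}$ gives torsion). So I would first replace the data. Alternatively, and more safely, I would pass to a further quotient: let $\overline{P}$ be $P$ modulo its torsion subgroup $\mathfrak{T}(P)$, which is normal by Proposition~\ref{ep41}. Then $\overline{P} \in \mathcal{N}^{\mathfrak{tf}}$ by quotient closure, and I need the composite map to remain injective on $A$ and~$B$.

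For this, suppose $(a,1)^{m} \in \ker$, with $m \geqslant 1$. Then $a^{m} \in H$ and its image corresponds to the identity in~$B$, which forces $a^{m} = 1$ and hence $a = 1$ by tor\-sion-freeness of~$A$. The kernel of $P \to \overline{P}$, however, is the torsion subgroup, so I must show that $A$ meets $\mathfrak{T}(P)$ trivially. If $a \in A$ has finite order in~$P$, then $a^{m} \in$ the amalgamating subgroup, so $a^{m} = h$ with $(h, 1) \sim (1, h\varphi)$. Then $h = 1$ would be needed, but that does not follow in general.

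So torsion-freeness genuinely requires isolation, and I would handle it by enlarging the edge subgroups to their isolators. Since $H_{\varepsilon e}$ is central and locally cyclic in a tor\-sion-free nilpotent group, its isolator is locally cyclic, tor\-sion-free and, by Proposition~\ref{ep41}, central. The obstruction is then only that $\varphi_{e}$ need not extend to the isolators. I expect to resolve this by embedding each $G_{v}$ into a tor\-sion-free $\mathcal{N}$\nobreakdash-group in which compatible roots of the edge generators are adjoined, via central products with rank-one abelian groups. This is the step I anticipate being hardest to carry out inside an arbitrary pre-root class~$\mathcal{N}$.
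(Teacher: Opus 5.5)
\textbf{There is a genuine gap: the proposal never finishes the proof.} Your opening is exactly the paper's argument: take $\sigma$, use Proposition~\ref{ep47}, note $\mathfrak{D}(\mathcal{G}(\mathcal{T}))\in\mathcal{N}$, then pass to the quotient by the torsion subgroup. But you abandon this route because of an obstacle that does not exist. The replacement plan (adjoining compatible roots of edge generators inside an arbitrary pre-root class) is never carried out.

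The faulty step is the sentence ``if $a\in A$ has finite order in $P$, then $a^{m}$ lies in the amalgamating subgroup \ldots\ $h=1$ would be needed, but that does not follow.'' Finite order in $P$ means $a^{m}$ is \emph{trivial} in $P$, i.e.\ $(a^{m},1)=(h,(h\varphi)^{-1})$ for some $h$. Then $h\varphi=1$, so $h=1$, $a^{m}=1$ in $A$, and $a=1$. This is literally the computation in your preceding sentence. You have conflated ``$a^{m}=1$ in $P$'' with ``$a^{m}$ is identified with an element of $B$.''

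No isolators and no induction on leaves are needed; argue directly in $\mathfrak{D}(\mathcal{G}(\mathcal{T}))$.
\begin{itemize}
\item By Proposition~\ref{ep47} and the commutative diagrams, $\sigma$ maps each $G_{v}$ isomorphically onto $G_{v}\sigma$.
\item Since $G_{v}$ is torsion-free, $G_{v}\sigma\cap\mathfrak{T}(\mathfrak{D}(\mathcal{G}(\mathcal{T})))=1$.
\item The subgroup $\mathfrak{T}(\mathfrak{D}(\mathcal{G}(\mathcal{T})))$ is normal by Proposition~\ref{ep41}, and the quotient by it is torsion-free and lies in $\mathcal{N}$ by quotient closure.
\item Hence the natural map $\eta$ onto this quotient is injective on every $G_{v}\sigma$, and $\sigma\eta$ is the required homomorphism.
\end{itemize}
Whether $\mathfrak{D}(\mathcal{G}(\mathcal{T}))$ itself has torsion, and so whether the edge subgroups are isolated, is irrelevant.

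Incidentally, your torsion example is wrong. The vector $(2,-3)$ is primitive in $\mathbb{Z}^{2}$, so $\mathbb{Z}^{2}/\langle(2,-3)\rangle\cong\mathbb{Z}$. Identifying $2\mathbb{Z}$ with $2\mathbb{Z}$ by the identity gives $\mathbb{Z}^{2}/\langle(2,-2)\rangle\cong\mathbb{Z}\times\mathbb{Z}/2\mathbb{Z}$, which does show that $\mathfrak{D}(\mathcal{G}(\mathcal{T}))$ can have torsion.
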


\begin{proof}
Suppose that $\mathfrak{D}(\mathcal{G}(\mathcal{T}))$ is~the~generalized direct product associated with~$\mathcal{G}(\mathcal{T})$ and~$\sigma\colon \pi_{1}(\mathcal{G}(\mathcal{T})) \to \mathfrak{D}(\mathcal{G}(\mathcal{T}))$ is~the~homomorphism that continues the~identity mapping of~the~generators of~$\pi_{1}(\mathcal{G}(\mathcal{T}))$. Since the~class~$\mathcal{N}$ is~closed under taking finite direct products and~quotient groups, it~contains~$\mathfrak{D}(\mathcal{G}(\mathcal{T}))$. The~natural homomorphism $\eta\colon \mathfrak{D}(\mathcal{G}(\mathcal{T})) \to \mathfrak{D}(\mathcal{G}(\mathcal{T}))/\mathfrak{T}(\mathfrak{D}(\mathcal{G}(\mathcal{T})))$ acts injectively on~all subgroups~$G_{v}$ ($v \in \mathcal{V}$) because they are~tor\-sion-free. It~remains to~note that $\sigma$ has the~same property and,~therefore, the~composition~$\sigma\eta$ is~the~desired mapping.
\end{proof}

\section{Proof of~Statement~\texorpdfstring{$(\alpha)$}{(alpha)}}\label{esec05}

In~what follows, given~$v \in \mathcal{V}$, let $\Theta(v) = \{(e,\varepsilon) \mid e \in \mathcal{E},\ \varepsilon = \pm 1,\ v = e(\varepsilon)\}$.

\begin{eproposition}\label{ep51}
Suppose that $\mathcal{C}$ is~a~pre-root class of~groups\textup{,} $\mathcal{T}$~is~a~finite tree\textup{,} and~for each $v \in \mathcal{V}$\textup{,} a~number $n_{v} \geqslant 1$ is~defined such that $G_{v} \in \mathit{Poly}_{n_{v}}\mathcal{C}$. Suppose also that\textup{,} for~some $n \geqslant 1$\textup{,} there exists a~homomorphism~$\sigma$ of~$\pi_{1}(\mathcal{G}(\mathcal{T}))$ onto~a~$(\mathit{Poly}_{n}\mathcal{C})$-group acting injectively on~all subgroups~$H_{\varepsilon e}$ \textup{(}$e \in \mathcal{E}$\textup{,} $\varepsilon = \pm 1$\textup{)}. If~$u \in \mathcal{V}$ and~$K \leqslant G_{u}$\textup{,} then one can find a~homomorphism of~$\pi_{1}(\mathcal{G}(\mathcal{T}))$ onto~a~$(\mathit{Poly}_{n+n_{u}}\mathcal{C})$-group that acts injectively on~$K$ and~all subgroups~$H_{\varepsilon e}$ \textup{(}$e \in \mathcal{E}$\textup{,} $\varepsilon = \pm 1$\textup{)}.
\end{eproposition}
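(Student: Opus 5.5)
Let $N=\ker\sigma$ and $P=\pi_{1}(\mathcal{G}(\mathcal{T}))$. By hypothesis $N$ meets every $H_{\varepsilon e}$ trivially, so Proposition~\ref{ep46} applies. It shows that $N$ is the free product of a free group $F$, the subgroups $N\cap G_{v}$ ($v\in\mathcal{V}$), and some conjugates $x^{-1}(N\cap G_{v})x$. The plan is to map $N$ onto a $(\mathit{Poly}_{n_{u}}\mathcal{C})$-group that is injective on $N\cap K$. We then pull this map back to a normal subgroup of $P$ using Proposition~\ref{ep31}. Finally we count lengths with Proposition~\ref{ep32}.

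\textbf{Step 1.} Define $\rho\colon N\to N\cap G_{u}$ by sending $F$ and every free factor other than the distinguished factor $N\cap G_{u}$ to $1$, and acting as the identity on $N\cap G_{u}$. This is a retraction of the free product onto one of its factors. Its image $N\cap G_{u}$ is a subgroup of $G_{u}\in\mathit{Poly}_{n_{u}}\mathcal{C}$. The class $\mathit{Poly}_{n_{u}}\mathcal{C}$ is closed under subgroups: intersect the series with the subgroup, and use that $\mathcal{C}$ is subgroup-closed. So $N/\ker\rho\in\mathit{Poly}_{n_{u}}\mathcal{C}$. Moreover $\rho$ is injective on $N\cap K\leqslant N\cap G_{u}$.

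\textbf{Step 2.} Let $Z=\ker\rho$. We have $P/N\in\mathit{Poly}_{n}\mathcal{C}$, so $P/N$ is equinumerous to a $\mathcal{C}$-group. Since $\mathit{Poly}_{n_{u}}\mathcal{C}$ is a pre-root class and $Z\in(\mathit{Poly}_{n_{u}}\mathcal{C})^{*}(N)$, Proposition~\ref{ep31}.1 gives a subgroup $U\leqslant Z$ with $U\in(\mathit{Poly}_{n_{u}}\mathcal{C})^{*}(N)$ and $U$ normal in $P$. Then $P/U$ is an extension of $N/U\in\mathit{Poly}_{n_{u}}\mathcal{C}$ by $P/N\in\mathit{Poly}_{n}\mathcal{C}$. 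Concatenating the two series shows $P/U\in\mathit{Poly}_{n+n_{u}}\mathcal{C}$. We take $P\to P/U$ as the required homomorphism.

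\textbf{Step 3: injectivity.} On each $H_{\varepsilon e}$ we have $H_{\varepsilon e}\cap U\leqslant H_{\varepsilon e}\cap N=1$. On $K$, take $k\in K\cap U$. Then $k\in N$, so $k\in N\cap K$. Also $k\in U\leqslant Z$, so $k\rho=1$, and injectivity of $\rho$ on $N\cap K$ gives $k=1$.

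\textbf{Main obstacle.} The delicate point is Step 1. We must make sure that the decomposition from Proposition~\ref{ep46} really contains $N\cap G_{u}$ itself as a free factor. The statement lists the subgroups $N\cap G_{v}$ explicitly, as opposed to only conjugates, so this holds. We then kill all other factors. If $N\cap G_{u}=1$ the argument degenerates harmlessly: $N\cap K=1$, so $\sigma$ already works, and it maps onto a $(\mathit{Poly}_{n}\mathcal{C})$-group, hence onto a $(\mathit{Poly}_{n+n_{u}}\mathcal{C})$-group.
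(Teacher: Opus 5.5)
Your proof is correct and follows the paper's argument: the same retraction of $N=\ker\sigma$ onto the free factor $N\cap G_{u}$ supplied by Proposition~\ref{ep46}, followed by choosing a normal subgroup of $\pi_{1}(\mathcal{G}(\mathcal{T}))$ inside its kernel whose quotient lies in $\mathit{Poly}_{n+n_{u}}\mathcal{C}$. The only cosmetic difference is how that subgroup is obtained. You apply Proposition~\ref{ep31}.1 to the pre-root class $\mathit{Poly}_{n_{u}}\mathcal{C}$ and concatenate the two series by hand, whereas the paper quotes Proposition~\ref{ep32}.2 for the series $1\leqslant M\leqslant N\leqslant\pi_{1}(\mathcal{G}(\mathcal{T}))$, which reduces to the same fact.
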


\begin{proof}
Let $N = \ker\sigma$. By~Proposition~\ref{ep46}, the~subgroup $N \cap G_{u}$ is~a~free factor of~$N$. Hence, there exists a~homomorphism $\tau\colon N \to N \cap G_{u}$ acting identically on~$N \cap G_{u}$. Let~us consider the~subnormal series $1 \leqslant M \leqslant N \leqslant \pi_{1}(\mathcal{G}(\mathcal{T}))$, \pagebreak where $M = \ker\tau$, and~note that $\pi_{1}(\mathcal{G}(\mathcal{T}))/N \in \mathit{Poly}_{n}\mathcal{C}$. Since $G_{u} \in \mathit{Poly}_{n_{u}}\mathcal{C}$ and~the~class~$\mathit{Poly}_{n_{u}}\mathcal{C}$ is~closed under taking subgroups, $N/M \cong N \cap G_{u} \in \mathit{Poly}_{n_{u}}\mathcal{C}$. Hence, by~Proposition~\ref{ep32}, there exists a~subgroup $L \in (\mathit{Poly}_{n+n_{u}}\mathcal{C})^{*}(\pi_{1}(\mathcal{G}(\mathcal{T})))$ lying in~$M$. Let $k \in K \setminus \{1\}$. If~$k \notin N$, then $k \notin L$ because $L \leqslant M \leqslant N$. If~$k \in N$, then $k \in N \cap G_{u}$ and, since $\tau$ acts identically on~$N \cap G_{u}$, the~relations $k \notin M$ and~$k \notin L$ hold. Thus, $K \cap L = 1$. It~also follows from~the~definition of~$\sigma$ that $H_{\varepsilon e} \cap L = 1$ for~any $e \in \mathcal{E}$ and~$\varepsilon = \pm 1$. Therefore, the~natural homomorphism $\pi_{1}(\mathcal{G}(\mathcal{T})) \to \pi_{1}(\mathcal{G}(\mathcal{T}))/L$ is~the~desired~one.
\end{proof}

For~each $v \in \mathcal{V}$, let $R_{v}$ be~a~normal subgroup of~$G_{v}$.~If
\begin{equation}\label{eq}
(R_{e(1)} \cap H_{+e})(\varphi_{+e})^{-1} = 
(R_{e(-1)} \cap H_{-e})(\varphi_{-e})^{-1}
\end{equation}
for~any $e \in \mathcal{E}$, we~will~call the~family $\mathcal{R} = \{R_{v} \mid v \in \mathcal{V}\}$ a~\emph{system of~compatible normal subgroups in~the~graph of~groups~$\mathcal{G}(\mathcal{T})$}. Let~us~put $\overline{G}_{v} = G_{v}/R_{v}$ ($v \in \mathcal{V}$)~and
\[
\overline{H}_{e} = 
H_{e}/(R_{e(1)} \cap H_{+e})(\varphi_{+e})^{-1} = 
H_{e}/(R_{e(-1)} \cap H_{-e})(\varphi_{-e})^{-1}
\]
($e \in \mathcal{E}$, $\varepsilon = \pm 1$). Let also $\overline{\varphi}_{\varepsilon e}\colon \overline{H}_{e} \to \overline{G}_{e(\varepsilon)}$ ($e \in \mathcal{E}$, $\varepsilon = \pm 1$) be~the~mapping that takes the~coset~$\bar h$ ($h \in H_{e}$) to~the~element~$(h\varphi_{\varepsilon e})R_{e(\varepsilon)}$. It~follows from~the~equalities~\eqref{eq} ($e \in \mathcal{E}$) that $\overline{\varphi}_{\varepsilon e}$ is~well defined and~is~an~isomorphism of~$\overline{H}_{e}$ onto~$H_{\varepsilon e}R_{e(\varepsilon)}/R_{e(\varepsilon)}$. Therefore, we can consider the~graph of~groups~$\mathcal{G}_{\mathcal{R}}(\mathcal{T})$ whose vertices and~edges are~associated with~the~groups~$\overline{G}_{v}$ ($v \in \mathcal{V}$), the~groups~$\overline{H}_{e}$ ($e \in \mathcal{E}$), and~the~isomorphisms~$\overline{\varphi}_{\pm e}$ ($e \in \mathcal{E}$). It~is~easy to~see that the~identity mapping of~the~generators of~$\pi_{1}(\mathcal{G}(\mathcal{T}))$ into~$\pi_{1}(\mathcal{G}_{\mathcal{R}}(\mathcal{T}))$ defines a~surjective homomorphism $\rho_{\mathcal{R}}\colon \pi_{1}(\mathcal{G}(\mathcal{T})) \to \pi_{1}(\mathcal{G}_{\mathcal{R}}(\mathcal{T}))$, whose kernel coincides with~the~normal closure of~the~set $\bigcup_{v \in \mathcal{V}}R_{v}$ in~$\pi_{1}(\mathcal{G}(\mathcal{T}))$. It~also follows from~the~definition of~$\mathcal{G}_{\mathcal{R}}(\mathcal{T})$ that $\rho_{\mathcal{R}}$ maps the~vertex groups and~edge subgroups of~$\pi_{1}(\mathcal{G}(\mathcal{T}))$ onto~the~corresponding vertex groups and~edge subgroups of~$\pi_{1}(\mathcal{G}_{\mathcal{R}}(\mathcal{T}))$, and~continues the~natural homomorphisms $G_{v} \to G_{v}/R_{v}$ ($v \in \mathcal{V}$).

\begin{eproposition}\label{ep52}
Suppose that $\mathcal{N}$ is~a~pre-root class of~nilpotent groups closed under taking quotient groups and~$\mathcal{N}^{\mathfrak{tf}}$ is~the~class of~all tor\-sion-free $\mathcal{N}$\nobreakdash-groups. Suppose also that $\mathcal{T}$ is~a~finite tree with~$n \geqslant 1$ vertices and~each subgroup $H_{\varepsilon e}$ \textup{(}$e \in \mathcal{E}$\textup{,} $\varepsilon = \pm 1$\textup{)} \mbox{is~a~non-triv\-i\-al} tor\-sion-free\kern-1pt{} locally\kern-1pt{} cyclic\kern-1pt{} group.\kern-1pt{} If\kern-1pt{}~$G_{v}\kern-2pt{} \in\kern-1pt{} \mathcal{N}^{\mathfrak{tf}}$\kern-1pt{} for\kern-1pt{}~all\kern-1pt{} $v\kern-1.5pt{} \in\kern-1.5pt{} \mathcal{V}$\kern-1pt{}\textup{,}\kern-1.5pt{} then\kern-1pt{}~the\kern-1pt{}~\mbox{following}\kern-1pt{}~\mbox{statements}\kern-1pt{}~hold.

\textup{1.}\hspace{1ex}There exists a~homomorphism of~$\pi_{1}(\mathcal{G}(\mathcal{T}))$ onto~a~$(\mathit{Poly}_{\max\{1,n-1\}}\mathcal{N}^{\mathfrak{tf}})$-group that acts injectively on~each subgroup~$H_{\varepsilon e}$ \textup{(}$e \in \mathcal{E}$\textup{,} $\varepsilon = \pm 1$\textup{)}.

\textup{2.}\hspace{1ex}For~any vertex $u \in \mathcal{V}$ and~for~any subgroup $K \leqslant G_{u}$\textup{,} there exists a~homomorphism of~$\pi_{1}(\mathcal{G}(\mathcal{T}))$ onto~a~$(\mathit{Poly}_{n}\mathcal{N}^{\mathfrak{tf}})$-group that acts injectively on~$K$ and~each subgroup~$H_{\varepsilon e}$ \textup{(}$e \in \mathcal{E}$\textup{,} $\varepsilon = \pm 1$\textup{)}.
\end{eproposition}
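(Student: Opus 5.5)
We argue by induction on $n$, proving Statement~1 first and deriving Statement~2 from it. When $n=1$, $\pi_{1}(\mathcal{G}(\mathcal{T})) = G_{u}$ is itself an $\mathcal{N}^{\mathfrak{tf}}$-group, and the identity map settles both statements. Once Statement~1 is known for a given $n$, Statement~2 for that $n$ follows from Proposition~\ref{ep51}: take $\mathcal{C} = \mathcal{N}^{\mathfrak{tf}}$ and $n_{v} = 1$, and apply it to the homomorphism onto a $(\mathit{Poly}_{n-1}\mathcal{N}^{\mathfrak{tf}})$-group given by Statement~1. For $n \geqslant 2$ the resulting length is $(n-1)+1 = n$. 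So the real work is Statement~1 for $n \geqslant 2$.

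For Statement~1, the aim is a system of compatible normal subgroups $\mathcal{R} = \{R_{v}\}$ with three properties: each $R_{v}$ meets every edge subgroup of $G_{v}$ trivially, each $\overline{G}_{v}$ is torsion-free, and every image edge subgroup is central in its vertex group. Proposition~\ref{ep48} then yields a homomorphism $\pi_{1}(\mathcal{G}_{\mathcal{R}}(\mathcal{T})) \to$ $\mathcal{N}^{\mathfrak{tf}}$-group that is injective on the vertex groups $\overline{G}_{v}$. Composed with $\rho_{\mathcal{R}}$, it is injective on all $H_{\varepsilon e}$, because $R_{v} \cap H_{\varepsilon e} = 1$. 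This handles the length-$1$ piece. Doing this for all vertices at once is impossible in general, so we work one vertex at a time.

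Pick a leaf $w$ of $\mathcal{T}$ with its unique edge $f$, and let $\mathcal{T}'$ be the tree with $w$ removed. By induction on the tree, $\pi_{1}(\mathcal{G}(\mathcal{T}'))$ maps onto a $(\mathit{Poly}_{n-2}\mathcal{N}^{\mathfrak{tf}})$-group (or onto an $\mathcal{N}^{\mathfrak{tf}}$-group when $n-1=1$) injectively on all its edge subgroups. The plan is to kill, in $G_{w}$ and in the neighbouring vertex group, a normal subgroup that makes $H_{\pm f}$ central; Proposition~\ref{ep42} suggests how. In a torsion-free nilpotent $X$, the locally cyclic $Y$ lies in $\zeta_{\lambda}X$ and meets $\zeta_{\lambda-1}X$ trivially. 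Factoring out $\zeta_{\lambda-1}X$ keeps $X$ torsion-free by Proposition~\ref{ep41}, but that is the wrong direction. Instead I would factor by a term of the lower central series, $\gamma_{\lambda+1}$, or rather its isolator. This should make $Y$ central modulo it while keeping $Y$ injective, since $Y \cap \zeta_{\lambda-1} = 1$ suggests a dual statement for the isolated lower terms. Compatibility~\eqref{eq} holds automatically because both intersections with the edge subgroups are trivial.

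Combining the two pieces is the delicate step. One option is to use the length-$1$ map from the generalized direct product for the star at one vertex and induction for the rest. Alternatively, following the one-step-per-vertex pattern of Proposition~\ref{ep51}, obtain an $\mathcal{N}^{\mathfrak{tf}}$-quotient injective on $H_{\pm f}$, whose kernel $N$ meets the edge subgroups trivially. By Proposition~\ref{ep46}, $N$ is a free product of a free group and conjugates of $N \cap G_{v}$. We would then need the edge subgroups of the other edges, conjugated into $N$, to admit a $(\mathit{Poly}_{n-2}\mathcal{N}^{\mathfrak{tf}})$-quotient of $N$ that is $\pi_{1}$-invariant. Such a quotient would come from Proposition~\ref{ep31}(1) together with Proposition~\ref{ep32}(2), giving total length $n-1$. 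The main obstacle is making the element-by-element centralization compatible across all edges at a vertex of high degree, where several edge subgroups must become central simultaneously, and keeping the quotient torsion-free. I would first try quotienting each $G_{v}$ by the isolator of $\gamma_{c+1}G_{v}$, where $c$ is the minimum of the $\lambda(H_{\varepsilon e})$ over incident edges, and checking injectivity by means of the torsion-free structure.
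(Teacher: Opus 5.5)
Your base case and the derivation of Statement~2 from Statement~1 via Proposition~\ref{ep51} (with $n_{v}=1$) match the paper. The inductive step for Statement~1, which is the whole content, is not carried out, and the concrete tool you propose for it fails.

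You dismiss factoring by an upper central term as ``the wrong direction'', but it is exactly right. If $Y\leqslant\zeta_{\lambda}X$ and $Y\cap\zeta_{\lambda-1}X=1$, then $Y$ maps injectively into $\zeta_{\lambda}X/\zeta_{\lambda-1}X$, which is the center of $X/\zeta_{\lambda-1}X$. This quotient is torsion-free by Proposition~\ref{ep41}. Your substitute, the isolator of $\gamma_{\lambda+1}X$, does neither job. Take the Heisenberg group $X=\langle a,b\rangle$ with $z=[a,b]$ central. Then $Y=\langle z\rangle$ has $\lambda(Y)=1$ and is killed by the isolator of $\gamma_{2}X=\langle z\rangle$. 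And $Y=\langle a\rangle$ has $\lambda(Y)=2$ and $\gamma_{3}X=1$, so it stays non-central.

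The combination step also has an obstruction that the leaf strategy cannot overcome. To collapse an edge $f$ via Proposition~\ref{ep48}, both $H_{+f}$ and $H_{-f}$ must become central at once while every other edge subgroup stays injective. This can fail for every leaf edge. Take a path with vertices $w_{1},u_{1},u_{2},w_{2}$ in this order, where each $G_{u_{i}}$ is the Heisenberg group, the leaf-edge subgroup at $u_{i}$ is $\langle a\rangle$, and the middle-edge subgroup at $u_{i}$ is $\langle z\rangle$. Making $a$ central at $u_i$ forces $z=[a,b]$ into the kernel, which kills the middle-edge subgroup.

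Your second option does not help either. It needs the kernel $N$ to meet \emph{all} edge subgroups trivially for Proposition~\ref{ep46} to apply. An $\mathcal{N}^{\mathfrak{tf}}$-quotient with that property would already settle Statement~1 with length $1$.

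The missing idea is the paper's, in four steps:
\begin{enumerate}
\item Factor each $G_{v}$ by $\zeta_{\lambda_{v}}G_{v}$, where $\lambda_{v}=\min\{\lambda_{\varepsilon e}\mid (e,\varepsilon)\in\Theta(v)\}-1$. This keeps all incident edge subgroups injective and makes those of minimal depth central.
\item Orient each edge towards the endpoint where $\mu_{\varepsilon e}=\lambda_{\varepsilon e}-\lambda_{e(\varepsilon)}$ is larger, and take a sink $v'$. An edge $f$ at $v'$ with $\mu=1$ at $v'$ then has $\mu=1$ at its other end too, so both $H_{\pm f}$ become central simultaneously.
\item Collapse this $f$ using Proposition~\ref{ep48}; in the path above it is the interior edge. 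This yields a tree with $n-1$ vertices and vertex groups in $\mathcal{N}^{\mathfrak{tf}}$.
\item Apply Statement~2, not Statement~1, of the inductive hypothesis at the merged vertex, with $K'$ the image of $H_{\pm f}$. This is what keeps $H_{\pm f}$ injective after it stops being an edge subgroup, and it gives length $n-1$.
\end{enumerate}
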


\begin{proof}
Let~us~prove both statements simultaneously using induction on~$n$. If~$n = 1$, the~identity mapping of~the~unique vertex group is~the~desired homomorphism. Therefore, we can assume in~what follows that $n > 1$. By~Proposition~\ref{ep42}, for~any $e \in \mathcal{E}$ and~$\varepsilon = \pm 1$, there exists a~number $\lambda_{\varepsilon e} \geqslant 1$ such that $H_{\varepsilon e} \leqslant \zeta_{\lambda_{\varepsilon e}}G_{e(\varepsilon)}$ and~$H_{\varepsilon e} \cap \zeta_{\lambda_{\varepsilon e}-1}G_{e(\varepsilon)} = 1$. Given $v \in \mathcal{V}$, let~us~put $\lambda_{v} = \min\{\lambda_{\varepsilon e} \mid (e,\varepsilon) \in \Theta(v)\} - 1$ and~$\mu_{\varepsilon e} = \lambda_{\varepsilon e} - \lambda_{v}$ ($(e,\varepsilon) \in \Theta(v)$). Since the~subgroup~$\zeta_{\lambda_{v}}G_{v}$ is~isolated in~$G_{v}$ by~Proposition~\ref{ep41} and~the~class~$\mathcal{N}$ is~closed under taking quotient groups, we have $G_{v}/\zeta_{\lambda_{v}}G_{v} \in \mathcal{N}^{\mathfrak{tf}}$. It~follows from~the~definition of~$\lambda_{v}$ that the~natural homomorphism $\eta_{v}\colon G_{v} \to G_{v}/\zeta_{\lambda_{v}}G_{v}$ acts injectively on~all subgroups~$H_{\varepsilon e}$ ($(e,\varepsilon) \in \Theta(v)$) and, for~at~least one pair $(e,\varepsilon) \in \Theta(v)$, the~equality $\mu_{\varepsilon e} = 1$ holds. The~latter means that the~subgroup~$H_{\varepsilon e}\eta_{v}$ lies in~the~center of~$G_{v}\eta_{v}$. Let~us~prove the~existence of~an~edge $f \in \mathcal{E}$ with~the~property $\mu_{+f} = 1 = \mu_{-f}$.

\enlargethispage{-6pt}

First of~all, we transform~$\mathcal{T}$ into~a~directed graph~$\overset{\kern.4ex{}\rightarrow}{\mathcal{T}}$ as~follows. If~$e \in \mathcal{E}$ and~$\mu_{\varepsilon e} < \mu_{-\varepsilon e}$ for~some $\varepsilon = \pm 1$, \pagebreak let~$e(\varepsilon)$ and~$e(-\varepsilon)$ be~the~origin and~target vertices of~$e$, respectively. If~$\mu_{\varepsilon e} = \mu_{-\varepsilon e}$, we orient the~edge~$e$ arbitrarily. Clearly, the~graph~$\overset{\kern.4ex{}\rightarrow}{\mathcal{T}}$ is~finite and~does~not~contain closed paths. Therefore, starting from~some vertex $v \in \mathcal{V}$ and~moving arbitrarily along the~arcs of~$\overset{\kern.4ex{}\rightarrow}{\mathcal{T}}$, we will inevitably arrive at~a~vertex $v^{\prime} \in \mathcal{V}$ that is~not~an~origin for~any arc. But,~as~noted above, the~equality $\mu_{\varepsilon f} = 1$ holds for~at~least one pair $(f,\varepsilon) \in \Theta(v^{\prime})$. Hence, $\mu_{-\varepsilon f} = \mu_{\varepsilon f} = 1$ and~$f$ is~the~desired edge.

Since each subgroup $R_{v} = \ker\eta_{v}$ ($v \in \mathcal{V}$) intersects trivially with~all subgroups~$H_{\varepsilon e}$ ($(e,\varepsilon) \in \Theta(v)$), the~family $\mathcal{R} = \{R_{v} \mid v \in \mathcal{V}\}$ is~a~system of~compatible normal subgroups. Therefore, the~graph of~groups~$\mathcal{G}_{\mathcal{R}}(\mathcal{T})$ and~the~homomorphism $\rho_{\mathcal{R}}\colon \pi_{1}(\mathcal{G}(\mathcal{T})) \to \pi_{1}(\mathcal{G}_{\mathcal{R}}(\mathcal{T}))$ are~defined. The~map~$\rho_{\mathcal{R}}$ extends the~homomorphisms~$\eta_{v}$ ($v \in \mathcal{V}$) and,~therefore, acts injectively on~all subgroups~$H_{\varepsilon e}$ ($e \in \mathcal{E}$, $\varepsilon = \pm 1$). Let~$\mathcal{T}_{f}$ be~the~subtree of~$\mathcal{T}$ consisting of~the~edge~$f$ and~the~vertices~$f(1)$ and~$f(-1)$. It~is~clear that the~group~$\pi_{1}(\mathcal{G}_{\mathcal{R}}(\mathcal{T}))$ can~be~considered as~the~fundamental group of~the~graph of~groups~$\mathcal{G}_{\mathcal{R}}^{\prime}(\mathcal{T}^{\prime})$ obtained from~$\mathcal{G}_{\mathcal{R}}(\mathcal{T})$ by~removing the~edge~$f$, merging its~ends, and~assigning the~group~$\pi_{1}(\mathcal{G}_{\mathcal{R}}(\mathcal{T}_{f}))$ to~this new vertex~$w$. Let~us~put $\mathcal{V}^{\prime} = \mathcal{V} \setminus \{f(1),f(-1)\} \cup \{w\}$.

It~follows from~the~choice of~$f$ that the~subgroups~$H_{+f}\rho_{\mathcal{R}}$ and~$H_{-f}\rho_{\mathcal{R}}$ lie in~the~centers of~the~groups~$G_{f(1)}\rho_{\mathcal{R}}$ and~$G_{f(-1)}\rho_{\mathcal{R}}$, respectively. Therefore, by~Proposition~\ref{ep48}, $\pi_{1}(\mathcal{G}_{\mathcal{R}}(\mathcal{T}_{f}))$ has a~homomorphism~$\psi$ onto~an~$\mathcal{N}^{\mathfrak{tf}}$\nobreakdash-group that acts injectively on~$G_{f(1)}\rho_{\mathcal{R}}$ and~$G_{f(-1)}\rho_{\mathcal{R}}$. In~particular, $\psi$~is~injective on~all edge subgroups of~the~graphs~$\mathcal{G}_{\mathcal{R}}(\mathcal{T})$ and~$\mathcal{G}_{\mathcal{R}}^{\prime}(\mathcal{T}^{\prime})$ contained in~$G_{f(1)}\rho_{\mathcal{R}}$ and~$G_{f(-1)}\rho_{\mathcal{R}}$. If~$S_{w} = \ker\psi$ and~$S_{v} = 1$ for~all $v \in \mathcal{V}^{\prime} \setminus \{w\}$, then the~family $\mathcal{S} = \{S_{v} \mid v \in \mathcal{V}^{\prime}\}$ is~a~system of~compatible normal subgroups in~$\mathcal{G}_{\mathcal{R}}^{\prime}(\mathcal{T}^{\prime})$. Hence, one can define the~graph of~groups~$(\mathcal{G}_{\mathcal{R}}^{\prime})_{\mathcal{S}}^{\vphantom{\prime}}(\mathcal{T}^{\prime})$ and~the~homomorphism $\rho^{\prime}_{\mathcal{S}}\colon \pi_{1}(\mathcal{G}_{\mathcal{R}}^{\prime}(\mathcal{T}^{\prime})) \to \pi_{1}((\mathcal{G}_{\mathcal{R}}^{\prime})_{\mathcal{S}}^{\vphantom{\prime}}(\mathcal{T}^{\prime}))$, which extends~$\psi$ and~the~identity mappings of~$G_{v}\rho_{\mathcal{R}}$ ($v \in \mathcal{V}^{\prime} \setminus \{w\}$). The~latter means that $\rho^{\prime}_{\mathcal{S}}$ is~injective on~all subgroups~$H_{\varepsilon e}\rho_{\mathcal{R}}$ ($e \in \mathcal{E}$, $\varepsilon = \pm 1$).

Since $\pi_{1}((\mathcal{G}_{\mathcal{R}}^{\prime})_{\mathcal{S}}^{\vphantom{\prime}}(\mathcal{T}^{\prime}))$ satisfies the~conditions of~this proposition and~$\mathcal{T}^{\prime}$ has $n-1$~vertices, the~inductive hypothesis can~be~applied to~the~group~$\pi_{1}((\mathcal{G}_{\mathcal{R}}^{\prime})_{\mathcal{S}}^{\vphantom{\prime}}(\mathcal{T}^{\prime}))$, the~vertex~$w$, and~the~subgroup $K^{\prime} = (H_{+f}\rho_{\mathcal{R}}^{\vphantom{\prime}})\rho^{\prime}_{\mathcal{S}} = (H_{-f}\rho_{\mathcal{R}}^{\vphantom{\prime}})\rho^{\prime}_{\mathcal{S}}$. By~this hypothesis, $\pi_{1}((\mathcal{G}_{\mathcal{R}}^{\prime})_{\mathcal{S}}^{\vphantom{\prime}}(\mathcal{T}^{\prime}))$ has a~homomorphism~$\sigma$ onto~a~$(\mathit{Poly}_{n-1}\mathcal{N}^{\mathfrak{tf}})$-group that is~injective on~the~subgroups~$K^{\prime}$ and~$H_{\varepsilon e}\rho_{\mathcal{R}}^{\vphantom{\prime}}\rho^{\prime}_{\mathcal{S}}$ ($e \in \mathcal{E} \setminus \{f\}$, $\varepsilon = \pm 1$). Therefore, the~composition~$\rho_{\mathcal{R}}^{\vphantom{\prime}}\rho^{\prime}_{\mathcal{S}}\sigma$ acts injectively on~all subgroups~$H_{\varepsilon e}$ ($e \in \mathcal{E}$, $\varepsilon = \pm 1$), and~Statement~1 is~proved. Statement~2 follows from~Statement~1 and~Proposition~\ref{ep51}.
\end{proof}

\begin{proof}[\textbf{\textup{Proof of~Statement~$\boldsymbol{(\alpha)}$}}]
For~each $v \in \mathcal{V}$, let~us~denote by~$R_{v}$ the~kernel of~the~natural homomorphism $\eta_{v}\colon G_{v} \to G_{v}/\mathfrak{T}(G_{v})$. Since the~subgroups~$H_{\varepsilon e}$ ($e \in \mathcal{E}$, $\varepsilon = \pm 1$) are~tor\-sion-free, the~equality $H_{\varepsilon e} \cap R_{e(\varepsilon)} = 1$ holds for~any $e \in \mathcal{E}$ and~$\varepsilon = \pm 1$. Hence, the~family $\mathcal{R} = \{R_{v} \mid v \in \mathcal{V}\}$ is~a~system of~compatible normal subgroups, and~we can consider the~graph of~groups~$\mathcal{G}_{\mathcal{R}}(\mathcal{T})$ and~the~homomorphism $\rho_{\mathcal{R}}\colon \pi_{1}(\mathcal{G}(\mathcal{T})) \to \pi_{1}(\mathcal{G}_{\mathcal{R}}(\mathcal{T}))$. The~latter extends the~mappings~$\eta_{v}$ ($v \in \mathcal{V}$) and,~therefore, acts injectively on~the~subgroups~$H_{\varepsilon e}$ ($e \in \mathcal{E}$, $\varepsilon = \pm 1$). Since $G_{v}\rho_{\mathcal{R}} = G_{v}/\mathfrak{T}(G_{v}) \in \mathcal{N}^{\mathfrak{tf}}$ for~all $v \in \mathcal{V}$, the~homomorphism~$\rho_{\mathcal{R}}$ can~be~extended to~the~desired one due~to~Proposition~\ref{ep52}.
\end{proof}

\section{Proof of~Statement~\texorpdfstring{$(\beta)$}{(beta)}, Statement~\texorpdfstring{$(\gamma)$}{(gamma)}, and~Corollary}\label{esec06}

Let $P$ be~the~free product of~groups~$X_{i}$ ($i \in \mathcal{I}$). Recall that the~\emph{normal form} of~an~element $x \in P$ is~a~presentation of~this element as~a~product $x_{1}x_{2} \ldots x_{l}$, where $l \geqslant 0$, $x_{1},\,x_{2},\,\ldots,\,x_{l} \in \bigcup_{i \in \mathcal{I}}X_{i} \setminus \{1\}$, and~if~$l \geqslant 2$, then, for~any $k \in \{1,\,2,\,\ldots,\,l-1\}$, the~elements~$x_{k}$ and~$x_{k+1}$ lie in~different free factors~$X_{i}$ ($i \in \mathcal{I}$) (it~is~assumed that if~$l = 0$, then $x_{1}x_{2} \ldots x_{l} = 1$). Using Theorem~1.2 from~\cite[Chapter~IV]{LyndonSchupp1980} and~induction on~the~number of~free factors, it~is~easy to~deduce that each element $x \in P$ has a~unique normal form. It~follows that $x \ne 1$ if~and~only~if~the~length~$l$ of~this form is~non\nobreakdash-zero.

\pagebreak

\begin{eproposition}\label{ep61}
If $\mathcal{C}$ is~a~pre-root class of~groups\textup{,} $\Phi$~is~the~class of~all free groups\textup{,} and~$n \geqslant 1$\textup{,} then the~free product~$P$ of~residually $(\mathit{Poly}_{n}\mathcal{C})$-groups~$X_{i}$ \textup{(}$i \in \mathcal{I}$\textup{)} and~a~free group~$Y$ is~residually a~$(\Phi \cdot \mathit{Poly}_{n}\mathcal{C})$-group.
\end{eproposition}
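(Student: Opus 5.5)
Fix a non-trivial element $x \in P$ with normal form $x = x_{1}x_{2}\ldots x_{l}$, $l \geqslant 1$, where each $x_{k}$ lies in some $X_{i}$ or in~$Y$. The plan is to build a homomorphism of~$P$ onto a group~$Q$ that is itself a free product of $(\mathit{Poly}_{n}\mathcal{C})$-groups and a free group, such that each syllable~$x_{k}$ survives and the image of~$x$ keeps a normal form of length~$l$. Then we pass from~$Q$ to a $(\Phi \cdot \mathit{Poly}_{n}\mathcal{C})$-quotient using the kernel of the natural map of a free product onto a direct product.

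\textbf{Step 1 (reduction to finitely many factors in $\mathit{Poly}_{n}\mathcal{C}$).} Let $\mathcal{J} \subseteq \mathcal{I}$ be the finite set of indices of factors~$X_{i}$ containing some syllable~$x_{k}$. For each $i \in \mathcal{J}$, the finitely many syllables of~$x$ lying in~$X_{i}$ are non-trivial. By Proposition~\ref{ep43} (Statement~2, applied with the class $\mathit{Poly}_{n}\mathcal{C}$, which is closed under subgroups and finite direct products), there is $N_{i} \in (\mathit{Poly}_{n}\mathcal{C})^{*}(X_{i})$ avoiding all of them. Map $X_{i} \to X_{i}/N_{i}$ for $i \in \mathcal{J}$, kill $X_{i}$ for $i \notin \mathcal{J}$, and keep~$Y$ unchanged. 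This gives a homomorphism of~$P$ onto $Q = \bigl(\ast_{i \in \mathcal{J}}X_{i}/N_{i}\bigr) \ast Y$. Adjacent syllables of~$x$ still lie in different factors and remain non-trivial, so by uniqueness of normal forms the image of~$x$ is non-trivial.

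\textbf{Step 2 (the quotient).} Let $D = \prod_{i \in \mathcal{J}}X_{i}/N_{i}$, a finite direct product, so $D \in \mathit{Poly}_{n}\mathcal{C}$. Consider the homomorphism $Q \to D$ that is the natural one on each $X_{i}/N_{i}$ and trivial on~$Y$. Its kernel~$K$ meets every conjugate of each factor $X_{i}/N_{i}$ trivially. By the Kurosh subgroup theorem, $K$ is therefore the free product of a free group and conjugates of subgroups of~$Y$, and hence $K$ is free. Thus $Q/K \cong D$ and $Q \in \Phi \cdot \mathit{Poly}_{n}\mathcal{C}$ outright, so the composite $P \to Q$ from Step~1 already maps~$x$ non-trivially onto a $(\Phi \cdot \mathit{Poly}_{n}\mathcal{C})$-group.

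\textbf{Main obstacle.} The delicate point is Step~2: we must verify that the kernel of $Q \to D$ is free even though $Y$ is an arbitrary free factor. Since $Y$ is free, all its subgroups are free, so the Kurosh decomposition yields a free group. This applies regardless of whether~$Y$ or the index set~$\mathcal{J}$ is empty; in the empty case the statement is trivial. No residual property of~$Y$ is needed, because the free part is absorbed into~$\Phi$.
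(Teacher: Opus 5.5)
Your proposal is correct and follows the paper's proof essentially step for step. You choose kernels from $(\mathit{Poly}_{n}\mathcal{C})^{*}(X_{j})$ for the finitely many relevant factors via Proposition~\ref{ep43}, pass to the free product of the quotients and~$Y$ while preserving the normal form of~$x$, and then apply the Kurosh subgroup theorem to see that the kernel of the map onto the finite direct product~$D$ is free because $Y$ is free — exactly as the paper does.
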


\begin{proof}
If $x \in P \setminus \{1\}$ is~an~arbitrary element and~$x_{1}x_{2} \ldots x_{l}$ is~its normal form, then $l > 0$ and~$x_{1},\,x_{2},\,\ldots,\,x_{l} \in \bigcup_{j \in \mathcal{J}}X_{j} \cup Y \setminus \{1\}$ for~some finite subset~$\mathcal{J}$ of~$\mathcal{I}$. Since~$X_{j}$ ($j \in \mathcal{J}$) are~residually $(\mathit{Poly}_{n}\mathcal{C})$-groups and~the~class~$\mathit{Poly}_{n}\mathcal{C}$ is~closed under taking subgroups and~finite direct products, Proposition~\ref{ep43} guarantees that, for~each $j \in \mathcal{J}$, there exists a~subgroup $R_{j} \in (\mathit{Poly}_{n}\mathcal{C})^{*}(X_{j})$ satisfying the~relation $R_{j} \cap (\{x_{k} \mid 1 \leqslant k \leqslant l\} \cap X_{j}) = \varnothing$. Let~us~consider the~homomorphism~$\sigma$ of~$P$ onto~the~free product of~$Y$ and~the~groups~$X_{j}/R_{j}$ ($j \in \mathcal{J}$) that maps the~free factors~$X_{i}$ ($i \in \mathcal{I} \setminus \mathcal{J}$) to~the~trivial element and~continues the~identity mappings of~the~generators of~$Y$ and~$X_{j}$ ($j \in \mathcal{J}$).

By~the~choice of~$R_{j}$ ($j \in \mathcal{J}$), the~product $x_{1}\sigma x_{2}\sigma \ldots x_{l}\sigma$ is~the~normal form of~$x\sigma$ of~length $l > 0$. Hence, $x\sigma \ne 1$. Let~$\tau$ be~the~homomorphism of~$P\sigma$ onto~the~direct product~$D$ of~the~groups~$X_{j}/R_{j}$ ($j \in \mathcal{J}$) that continues the~identity mappings of~the~factors~$X_{j}/R_{j}$ ($j \in \mathcal{J}$) and~takes~$Y$ to~$1$. It~follows from~the~equalities $\ker\tau \cap z^{-1}(X_{j}/R_{j})z = z^{-1}(\ker\tau \cap (X_{j}/R_{j}))z = 1$ ($j \in \mathcal{J}$, $z \in P\sigma$) and~Kurosh subgroup theorem~\cite{Kurosch1934MA} that $\ker\tau$ splits as~the~free product of~a~free subgroup and~subgroups of~the~form $\ker\tau \cap z^{-1}Yz$ ($z \in P\sigma$). Since $Y$ is~free and~$D \in \mathit{Poly}_{n}\mathcal{C}$, $\ker\tau$~is~also free and~$P\sigma \in \Phi \cdot \mathit{Poly}_{n}\mathcal{C}$. Thus, $P$~is~residually a~$(\Phi \cdot \mathit{Poly}_{n}\mathcal{C})$-group.
\end{proof}

\begin{eproposition}\label{ep62}
Suppose that $\mathcal{C}$ is~a~pre-root class of~groups\textup{,} $\mathcal{T}$~is~finite\textup{,} and~for~some $n \geqslant 1$\textup{,} there exists a~homomorphism~$\sigma$ of~$\pi_{1}(\mathcal{G}(\mathcal{T}))$ onto~a~$(\mathit{Poly}_{n}\mathcal{C})$-group that acts injectively on~all subgroups~$H_{\varepsilon e}$ \textup{(}$e \in \mathcal{E}$\textup{,} $\varepsilon = \pm 1$\textup{)}. If~each~$G_{v}$ \textup{(}$v \in \mathcal{V}$\textup{)} is~residually a~$(\mathit{Poly}_{m}\mathcal{C})$-group and~every free group is~residually a~$(\mathit{Poly}_{l}\mathcal{C})$-group for~some $l,m \geqslant 1$\textup{,} then $\pi_{1}(\mathcal{G}(\mathcal{T}))$ is~residually a~$(\mathit{Poly}_{l+m+n}\mathcal{C})$-group.
\end{eproposition}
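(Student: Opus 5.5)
Let $N = \ker\sigma$. By assumption $N$ is a normal subgroup of $\pi_{1}(\mathcal{G}(\mathcal{T}))$ that meets every edge subgroup $H_{\varepsilon e}$ trivially, and $\pi_{1}(\mathcal{G}(\mathcal{T}))/N \in \mathit{Poly}_{n}\mathcal{C}$. The plan is to show that $N$ is residually a $(\mathit{Poly}_{l+m}\mathcal{C})$-group. Statement~3 of Proposition~\ref{ep32} then applies to the extension of $N$ by the $(\mathit{Poly}_{n}\mathcal{C})$-group $\pi_{1}(\mathcal{G}(\mathcal{T}))/N$ and shows at once that $\pi_{1}(\mathcal{G}(\mathcal{T}))$ is residually a $(\mathit{Poly}_{l+m+n}\mathcal{C})$-group.

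To analyse $N$ we use Proposition~\ref{ep46}, which is applicable because $\mathcal{T}$ is finite and $N$ meets all $H_{\varepsilon e}$ trivially. It says that $N$ is the free product of a free group $Y$ and of subgroups of the form $x^{-1}(N \cap G_{v})x$. Each such factor is isomorphic to a subgroup of $G_{v}$. By Proposition~\ref{ep43}, applied to the class $\mathit{Poly}_{m}\mathcal{C}$, which is closed under subgroups and finite direct products, every factor is therefore residually a $(\mathit{Poly}_{m}\mathcal{C})$-group. Proposition~\ref{ep61} now shows that $N$ is residually a $(\Phi \cdot \mathit{Poly}_{m}\mathcal{C})$-group, where $\Phi$ is the class of free groups.

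It remains to pass from $\Phi \cdot \mathit{Poly}_{m}\mathcal{C}$ to $\mathit{Poly}_{l+m}\mathcal{C}$. Take $x \in N \setminus \{1\}$ and a homomorphism of $N$ onto a group $Q$ with a free normal subgroup $F$ such that $Q/F \in \mathit{Poly}_{m}\mathcal{C}$ and the image of $x$ is non-trivial. By hypothesis $F$ is residually a $(\mathit{Poly}_{l}\mathcal{C})$-group. Statement~3 of Proposition~\ref{ep32}, applied to $Q$ as an extension of $F$ by $Q/F$, shows that $Q$ is residually a $(\mathit{Poly}_{l+m}\mathcal{C})$-group. Hence the image of $x$ survives in some $(\mathit{Poly}_{l+m}\mathcal{C})$-quotient of $Q$, and therefore of $N$. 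This proves that $N$ is residually a $(\mathit{Poly}_{l+m}\mathcal{C})$-group, and the final step is the one described in the first paragraph.

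The argument is mostly bookkeeping. The one point to check is Proposition~\ref{ep46}, which gives a free decomposition whose factors are free or conjugates of subgroups of vertex groups; Proposition~\ref{ep61} requires exactly this shape, with a single free factor $Y$, possibly trivial. Both Proposition~\ref{ep61} and Proposition~\ref{ep32} are stated for an arbitrary pre-root class $\mathcal{C}$, so no further hypotheses on $\mathcal{C}$ are needed.
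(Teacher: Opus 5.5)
Your proposal is correct and follows the paper's proof essentially step by step. You decompose $N=\ker\sigma$ via Proposition~\ref{ep46}, use Propositions~\ref{ep43} and~\ref{ep61} to show that $N$ is residually a $(\Phi\cdot\mathit{Poly}_{m}\mathcal{C})$-group, and then apply Statement~3 of Proposition~\ref{ep32} twice; your explicit argument that every $(\Phi\cdot\mathit{Poly}_{m}\mathcal{C})$-group is residually a $(\mathit{Poly}_{l+m}\mathcal{C})$-group is exactly what the paper compresses into a single citation of Proposition~\ref{ep32}.
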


\begin{proof}
By~Proposition~\ref{ep46}, the~subgroup $N = \ker\sigma$ splits as~the~free product of~a~free subgroup and~subgroups of~the~form $x^{-1}(N \cap G_{v})x$, where $v \in \mathcal{V}$ and~$x \in \pi_{1}(\mathcal{G}(\mathcal{T}))$. Since~$G_{v}$ ($v \in \mathcal{V}$) are~residually $(\mathit{Poly}_{m}\mathcal{C})$-groups and~the~class~$\mathit{Poly}_{m}\mathcal{C}$ is~closed under taking subgroups and~finite direct products, the~subgroups~$N \cap G_{v}$ ($v \in \mathcal{V}$) are~also residually $(\mathit{Poly}_{m}\mathcal{C})$-groups due~to~Proposition~\ref{ep43}. It~follows from~this fact and~Proposition~\ref{ep61} that $N$ is~residually a~$(\Phi \cdot \mathit{Poly}_{m}\mathcal{C})$-group, where $\Phi$ is~the~class of~all free groups. By~Proposition~\ref{ep32}, every $(\Phi \cdot \mathit{Poly}_{m}\mathcal{C})$-group is~residually a~$(\mathit{Poly}_{l+m}\mathcal{C})$-group. Hence, $N$ is~also residually a~$(\mathit{Poly}_{l+m}\mathcal{C})$-group, while $\pi_{1}(\mathcal{G}(\mathcal{T}))$ is~residually a~$(\mathit{Poly}_{l+m+n}\mathcal{C})$-group due~to~the~same Proposition~\ref{ep32}.
\end{proof}

\begin{proof}[\textbf{\textup{Proof of~Statements~$\boldsymbol{(\beta)}$ and~$\boldsymbol{(\gamma)}$}}]
By~Proposition~\ref{ep44}, if~$G_{e(\varepsilon)}$ ($e \in \mathcal{E}$, $\varepsilon = \pm 1$) is~residually an~$\mathcal{N}^{\mathfrak{tf}}$\nobreakdash-group, then it has a~homomorphism onto~an~$\mathcal{N}^{\mathfrak{tf}}$\nobreakdash-group that is~injective on~$H_{\varepsilon e}$. Therefore, we can prove Statements~$(\beta)$ and~$(\gamma)$ simultaneously assuming further that the~symbol~$\mathcal{C}$ denotes the~class~$\mathcal{N}$ (in~the~case of~Statement~$(\beta)$) or~the~class~$\mathcal{N}^{\mathfrak{tf}}$ (in~the~case of~Statement~$(\gamma)$).

It~follows from~Proposition~\ref{ep43} that, for~each $v \in \mathcal{V}$, there exists a~homomorphism~$\sigma_{v}$ of~$G_{v}$ onto~a~$\mathcal{C}$\nobreakdash-group that acts injectively on~all subgroups~$H_{\varepsilon e}$ ($(e,\varepsilon) \in \Theta(v)$). The~family $\mathcal{R} = \{\ker\sigma_{v} \mid v \in \mathcal{V}\}$ is, obviously, a~system of~compatible normal subgroups. Therefore, one can define the~graph of~groups~$\mathcal{G}_{\mathcal{R}}(\mathcal{T})$ and~the~homomorphism $\rho_{\mathcal{R}}\colon \pi_{1}(\mathcal{G}(\mathcal{T})) \to \pi_{1}(\mathcal{G}_{\mathcal{R}}(\mathcal{T}))$, which extends~$\sigma_{v}$ ($v \in \mathcal{V}$) and~takes the~edge subgroups of~$\pi_{1}(\mathcal{G}(\mathcal{T}))$ onto~the~ones of~$\pi_{1}(\mathcal{G}_{\mathcal{R}}(\mathcal{T}))$. Since $\pi_{1}(\mathcal{G}_{\mathcal{R}}(\mathcal{T}))$ satisfies the~conditions of~Statement~$(\alpha)$, there exists a~homomorphism~$\tau$ of~this group that maps it onto~a~$(\mathit{Poly}_{n-1}\mathcal{N}^{\mathfrak{tf}})$-group and~acts injectively on~all subgroups~$H_{\varepsilon e}\rho_{\mathcal{R}}$ ($e \in \mathcal{E}$, $\varepsilon = \pm 1$). It~follows that the~composition~$\rho_{\mathcal{R}}\tau$ is~injective on~the~subgroups~$H_{\varepsilon e}$ ($e \in \mathcal{E}$, $\varepsilon = \pm 1$), and~$\pi_{1}(\mathcal{G}(\mathcal{T}))$ is~residually a~$(\mathit{Poly}_{n+1}\mathcal{C})$-group by~Proposition~\ref{ep62}.
\end{proof}

\begin{proof}[\textbf{\textup{Proof of~Corollary}}]
$(\dag)$\hspace{1ex}Since all~$G_{v}$ ($v \in \mathcal{V}$) are~residually $\mathcal{N}^{\mathfrak{tf}}$\nobreakdash-groups, they are~tor\-sion-free and~the~subgroups~$H_{\varepsilon e}$ ($e \in \mathcal{E}$, $\varepsilon = \pm 1$) have the~same property. If~$\mathcal{N}$ denotes the~class of~all nilpotent groups, then it~follows from~the~arguments used to~prove Statements~$(\beta)$ and~$(\gamma)$ that there exists a~homomorphism of~$\pi_{1}(\mathcal{G}(\mathcal{T}))$ onto~a~$(\mathit{Poly}_{n-1}\mathcal{N}^{\mathfrak{tf}})$-group acting injectively on~all edge subgroups. By~Proposition~\ref{ep46}, the~kernel~$K$ of~this homomorphism splits as~the~free product of~a~free subgroup and~subgroups of~the~form $x^{-1}(K \cap G_{v})x$, where $v \in \mathcal{V}$ and~$x \in \pi_{1}(\mathcal{G}(\mathcal{T}))$. It~is~well known that every free group is~residually an~$\mathcal{N}^{\mathfrak{tf}}$\nobreakdash-group~\cite{Hall1950PAMS, Magnus1935MA} and~the~free product of~any number of~residually $\mathcal{N}^{\mathfrak{tf}}$\nobreakdash-groups is, in~turn, residually an~$\mathcal{N}^{\mathfrak{tf}}$\nobreakdash-group~\cite{Malcev1949MS}. Since~$G_{v}$ ($v \in \mathcal{V}$) are~residually $\mathcal{N}^{\mathfrak{tf}}$\nobreakdash-groups, it~follows from~these facts and~Proposition~\ref{ep43} that any subgroup of~the~form $x^{-1}(K \cap G_{v})x$ ($v \in \mathcal{V}$, $x \in \pi_{1}(\mathcal{G}(\mathcal{T}))$) is~residually an~$\mathcal{N}^{\mathfrak{tf}}$\nobreakdash-group and~$\pi_{1}(\mathcal{G}(\mathcal{T}))$ is~an~extension of~the~residually $\mathcal{N}^{\mathfrak{tf}}$\nobreakdash-group~$K$ by~a~$(\mathit{Poly}_{n-1}\mathcal{N}^{\mathfrak{tf}})$-group. Hence, $\pi_{1}(\mathcal{G}(\mathcal{T}))$ is~residually a~$(\mathit{Poly}_{n}\mathcal{N}^{\mathfrak{tf}})$-group by~Proposition~\ref{ep32}.

$(\ddag)$\hspace{1ex}Let $x \in \pi_{1}(\mathcal{G}(\mathcal{T})) \setminus \{1\}$ be~an~arbitrary element, and~let~$B_{v}$ ($v \in \mathcal{V}$) be~some free bases of~$G_{v}$ ($v \in \mathcal{V}$). Since every locally cyclic subgroup of~a~free group is~cyclic, $\pi_{1}(\mathcal{G}(\mathcal{T}))$ has~cyclic edge subgroups and,~therefore, can~be~given by~a~presentation with~the~set of~generators $S = \bigsqcup_{v \in \mathcal{V}}B_{v}$ and~a~finite set of~defining relations. It~follows that there exists a~finite subset $Q \subseteq S$ containing all generators from~the~relations of~$\pi_{1}(\mathcal{G}(\mathcal{T}))$ and~from~some fixed word defining~$x$.

It~is~obvious that $\pi_{1}(\mathcal{G}(\mathcal{T}))$ splits as~the~free product of~the~subgroups $F_{1} = \operatorname{sgp}\{Q\}$ and~$F_{2} = \operatorname{sgp}\{S \setminus Q\}$. Moreover, $F_{1}$ is~a~tree product of~$n$~free groups of~finite rank with~non-triv\-i\-al cyclic edge subgroups and~$x \in F_{1} \setminus \{1\}$. Let $\rho\colon \pi_{1}(\mathcal{G}(\mathcal{T})) \to F_{1}$ be~the~homomorphism that acts identically on~$F_{1}$ and~sends~$F_{2}$ to~$1$. As~mentioned above, every free group is~residually an~$\mathcal{N}^{\mathfrak{tf}}$\nobreakdash-group. Therefore, $F_{1}$ is~residually a~$(\mathit{Poly}_{n}\mathcal{N}^{\mathfrak{tf}})$-group by~Statement~$(\dag)$. Since it~is~finitely generated, all its homomorphic images have the~same property. Hence, $\rho$ can~be~extended to~a~homomorphism onto~a~finitely generated $(\mathit{Poly}_{n}\mathcal{N}^{\mathfrak{tf}})$\nobreakdash-group taking~$x$ to~a~non-triv\-i\-al element.
\end{proof}

\end{document}